\theoremstyle{theorem}
\newtheorem{theorem}[subsection]{Theorem}
\newtheorem*{theorem*}{Theorem \ref{injproj}}
\newtheorem{cor}[subsection]{Corollary}
\newtheorem{prop}[subsection]{Proposition}
\newtheorem{lemme}[subsection]{Lemma}
\theoremstyle{definition}
\newtheorem{defn}[subsection]{Definition}
\newtheorem{ex}[subsection]{Example}
\newtheorem{notation}[subsection]{Notation}
\theoremstyle{remark}
\newtheorem{rem}[subsection]{Remark}
\tikzstyle directed=[postaction={decorate,decoration={markings, mark=at position .5 with {\arrow{angle 90}}}}]
\tikzstyle directed2=[postaction={decorate,decoration={markings, mark=at position .3 with {\arrow{angle 90}}}}]
\tikzstyle directed3=[postaction={decorate,decoration={markings, mark=at position .2 with {\arrow{angle 90}}}}]
\newlist{rome}{enumerate}{7}
\setlist[rome]{label=(\roman*)}
\newenvironment{tz}{\begin{center}\begin{tikzpicture}[scale=1, every node/.style={scale=.9}]}{\end{tikzpicture}\end{center}}
\tikzstyle{b}=[fill=white]
\newcommand{\cev}[1]{\reflectbox{\ensuremath{\vec{\reflectbox{\ensuremath{#1}}}}}}
\newcommand{\ca}{\mathcal{A}}
\newcommand{\cb}{\mathcal{B}}
\newcommand{\cc}{\mathcal{C}}
\newcommand{\cf}{\mathcal{F}}
\newcommand{\cw}{\mathcal{W}}
\newcommand{\cm}{\mathcal{M}}
\newcommand{\cn}{\mathcal{N}}
\newcommand{\ck}{\mathcal{K}}
\newcommand{\Ll}{\cev{\mathcal L}}
\newcommand{\Rl}{\cev{\mathcal R}}
\newcommand{\Lr}{\vec{\mathcal L}}
\newcommand{\Rr}{\vec{\mathcal R}}
\newcommand{\Hom}{\operatorname{Hom}}
\newcommand{\Nu}{\mathcal{V}}
\newcommand{\id}{\operatorname{id}}
\newcommand{\ob}{\operatorname{Ob}}
\newcommand{\sset}{\mathrm{sSet}}
\newcommand{\CAT}{\mathrm{CAT}}
\newcommand{\op}{\operatorname{op}}
\newcommand{\Cyl}{\operatorname{Cyl}}
\newcommand{\Path}{\operatorname{Path}}
\newcommand{\Sp}{\mathrm{Sp}^\Sigma}
\newcommand{\ChR}{\mathrm{Ch}_R}
\newcommand{\Mod}{\mathrm{Mod}}
\newcommand{\ObD}{\operatorname{Ob}D}
\newcommand{\inj}{\operatorname{inj}}
\newcommand{\proj}{\operatorname{proj}}
\newcommand{\lift}{
\ensuremath{\begin{tikzpicture}
\draw[line width=.5pt] (1ex,1ex) -- (2ex,1ex);
\draw[line width=.5pt] (1ex,0) -- (2ex,0);
\draw[line width=.5pt] (1ex,0) -- (1ex,1ex);
\draw[line width=.5pt] (2ex,0) -- (2ex,1ex);
\draw[line width=.5pt] (1ex,0) -- (2ex, 1ex);
\end{tikzpicture}
}}
\begin{document}

\title[Injective and projective model structures on enriched diagrams]{Injective and projective model structures on enriched diagram categories}

\author{Lyne Moser}
\address{UPHESS BMI FSV, Ecole Polytechnique Fédérale de Lausanne, Station 8, CH-1015 $\text{Lausanne}$,
Switzerland}
\email{lyne.moser@epfl.ch}

\date{\today}
\subjclass{55U35, 18G55, 18D20}
\keywords{Injective and projective model structures, enriched functor categories, induced model structures}

\begin{abstract}
In the enriched setting, the notions of injective and projective model structures on a category of enriched diagrams also make sense. In this paper, we prove the existence of these model structures on enriched diagram categories under local presentability, accessibility, and ``acyclicity'' conditions, using the methods of lifting model structures from an adjunction introduced by Garner, Hess, K\polhk edziorek, Riehl, and Shipley. 
\end{abstract}

\maketitle

\section{Introduction}

Categories of diagrams in a model category might be endowed with two particularly useful model structures: the injective and projective ones. In the non-enriched case, the existence of such model structures on categories of diagrams in a combinatorial model category is folklore. However, the  breakthrough by Garner, Hess, K\polhk{e}dziorek, Riehl, and Shipley in \cite{HKRS}, and then in \cite{GKR}, provides new tools which unify the proof of existence of both injective and projective model structures under weaker assumptions, namely for \emph{accessible} model categories, i.e.~locally presentable model categories with accessible functorial factorizations. 
In this paper, we provide an enriched version of this result using the methods of lifting model structures of \cite{HKRS} and \cite{GKR}. We prove the following main theorem. For $(\Nu, \otimes, I)$ a closed symmetric monoidal category, we write~$[D,\ca]_0$ for the category of $\Nu$-functors from a small $\Nu$-category $D$ to a $\text{$\Nu$-category}$~$\ca$. 

\begin{theorem*}
Let $(\Nu, \otimes, I)$ be a locally presentable base (Definition \ref{base}). Suppose~$\ca$~is a $\Nu$-complete locally $\Nu$-presentable $\Nu$-category such that its underlying category $\ca_0$ admits an accessible model structure, and let $D$ be a small $\Nu$-category. 
\begin{rome}
\item If the functors $-\otimes D(d,d')\colon \ca_0\to \ca_0$ preserve cofibrations for all $d,d'\in D$, the injective model structure on the category $[D, \ca]_0$ exists. 
\item If the functors $-\otimes D(d,d')\colon \ca_0\to \ca_0$ preserve trivial cofibrations for all ${d,d'\in D}$, the projective model structure on the category $[D, \ca]_0$ exists. 
\end{rome}
\end{theorem*}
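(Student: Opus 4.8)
The plan is to realise both model structures as liftings of the product model structure along a single restriction functor, and to reduce their existence to two acyclicity conditions admitting short dual proofs. Concretely, let $\iota\colon\ObD\to D$ be the inclusion of the discrete $\Nu$-category on the objects of $D$, so that $[\ObD,\ca]_0\cong\prod_{d\in\ObD}\ca_0$; this product carries the product model structure, which is accessible because the one on $\ca_0$ is, a set-indexed product of accessible model categories being accessible. Since $\ca$ is locally $\Nu$-presentable and $\Nu$-complete and $D$ is small, $[D,\ca]_0$ is locally presentable, and the restriction functor $\iota^*\colon[D,\ca]_0\to[\ObD,\ca]_0$ has a left adjoint $\operatorname{Lan}_{\iota}$ (enriched left Kan extension, using $\Nu$-cocompleteness of $\ca$) and a right adjoint $\operatorname{Ran}_{\iota}$ (using $\Nu$-completeness), with pointwise formulas $(\operatorname{Lan}_{\iota}A)(d')\cong\coprod_{d\in\ObD}A_d\otimes D(d,d')$ and $(\operatorname{Ran}_{\iota}A)(d')\cong\prod_{d\in\ObD}\langle D(d',d),A_d\rangle$, where $\langle D(d,d'),-\rangle$ denotes the right adjoint of the copower functor $-\otimes D(d,d')$. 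Unwinding the definitions, the projective model structure on $[D,\ca]_0$ is exactly the model structure right-induced from the product model structure along $\iota^*$ (via $\operatorname{Lan}_{\iota}\dashv\iota^*$), and the injective model structure is the one left-induced along $\iota^*$ (via $\iota^*\dashv\operatorname{Ran}_{\iota}$).

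I would then invoke the lifting theorems of \cite{HKRS} and \cite{GKR}: since $[\ObD,\ca]_0$ is an accessible model category and $[D,\ca]_0$ is locally presentable, each of these lifted model structures exists if and only if the corresponding acyclicity condition holds — for the injective one, that every map of $[D,\ca]_0$ with the right lifting property against all maps $g$ for which $\iota^*g$ is a cofibration of the product model structure (equivalently, an objectwise cofibration) is an objectwise weak equivalence; for the projective one, that every map with the left lifting property against all objectwise fibrations is an objectwise weak equivalence. This reduction is precisely where accessibility of $\ca_0$ is used in place of cofibrant generation, so that no generating sets for $\ca_0$ need to be assumed.

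It remains to verify the two acyclicity conditions, and I would do so by mutually dual arguments. For the injective one, given a map $p$ with the stated lifting property, a cofibration $i$ of $\ca_0$, and an object $d_0\in\ObD$, let $\delta_{d_0}i$ be the map of $\prod_{d}\ca_0$ equal to $i$ in the coordinate $d_0$ and to the identity of the initial object in the others — a cofibration of the product model structure. By the formula above, $\operatorname{Lan}_{\iota}(\delta_{d_0}i)$ has components $d'\mapsto i\otimes D(d_0,d')$, hence is an objectwise cofibration by hypothesis (i); so $p$ lifts against $\operatorname{Lan}_{\iota}(\delta_{d_0}i)$, and transposing across $\operatorname{Lan}_{\iota}\dashv\iota^*$ shows that $p_{d_0}$ has the right lifting property against $i$. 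As $i$ and $d_0$ vary, $\iota^*p$ becomes an objectwise trivial fibration, in particular an objectwise weak equivalence. For the projective one the argument is the mirror image: given a fibration $\phi$ of $\ca_0$ and $d_0\in\ObD$, the map $\delta_{d_0}\phi$ — equal to $\phi$ at $d_0$ and to the identity of the terminal object elsewhere — is a fibration of $\prod_{d}\ca_0$, and $\operatorname{Ran}_{\iota}(\delta_{d_0}\phi)$ has components $d'\mapsto\langle D(d',d_0),\phi\rangle$, hence is an objectwise fibration because hypothesis (ii) makes $-\otimes D(d',d_0)$ preserve trivial cofibrations and therefore its right adjoint $\langle D(d',d_0),-\rangle$ preserve fibrations. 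A map $f$ as in the condition thus lifts against each $\operatorname{Ran}_{\iota}(\delta_{d_0}\phi)$, and transposing across $\iota^*\dashv\operatorname{Ran}_{\iota}$ forces each $f_{d_0}$ to have the left lifting property against every fibration of $\ca_0$; hence $\iota^*f$ is an objectwise trivial cofibration, in particular an objectwise weak equivalence.

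The step I expect to be the real obstacle is not either verification — each is short once the adjunctions and their pointwise formulas are in hand — but the reduction itself: confirming that the hypotheses of the \cite{HKRS} and \cite{GKR} machinery hold verbatim (local presentability of $[D,\ca]_0$, accessibility of the product model structure on $[\ObD,\ca]_0$, and existence of $\operatorname{Lan}_{\iota}$ and $\operatorname{Ran}_{\iota}$), and reading off the exact ``if and only if'' acyclicity criteria in the two directions. Each monoidal hypothesis then enters exactly once: (i) ensures $\operatorname{Lan}_{\iota}$ carries objectwise cofibrations to objectwise cofibrations, which drives the injective case, and (ii) ensures $\operatorname{Ran}_{\iota}$ carries objectwise fibrations to objectwise fibrations, which drives the projective case.
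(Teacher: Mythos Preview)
Your proposal is correct and follows essentially the same route as the paper: both realise the injective and projective structures as left- and right-liftings along $i^*\colon[D,\ca]_0\to[\ObD,\ca]_0$, invoke the Acyclicity Theorem of \cite{HKRS,GKR} after checking local presentability and accessibility, and verify acyclicity by showing that an injective-trivially-fibrant (resp.\ projective-trivially-cofibrant) map is in fact pointwise so. The only cosmetic difference is that the paper phrases the key transposition via the enriched Yoneda isomorphism $\ca(A,Fd)\cong[D,\ca](A\otimes D(d,-),F)$, whereas you compute $\operatorname{Lan}_\iota(\delta_{d_0}i)$ explicitly and transpose across $\operatorname{Lan}_\iota\dashv\iota^*$; these are the same computation, since $A\otimes D(d_0,-)$ is precisely the left Kan extension of the object $A$ placed at $d_0$.
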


Some results about the existence of injective and projective model structures on categories of enriched diagrams can already be found in the literature. Lurie proves the case of a combinatorial $S$-enriched model category in \cite[Proposition A.3.3.2]{LUR}, where $S$ is an \emph{excellent} model category, e.g.~the category of simplicial sets with the Quillen model structure. On the other hand, in \cite[Theorem 4.2]{DRO}, Dundas, R\"ondigs, and {\O}stv{\ae}r prove the existence of the projective model structure on categories of $\Nu$-diagrams in a symmetric monoidal model category $\Nu$, which is \emph{weakly finitely generated}, i.e.~cofibrantly generated in a stronger sense, and which satisfies the monoid axiom.  They use this result to construct models for stable homotopy categories, which give examples of applications of the homotopy theory of enriched diagrams to the fields of equivariant stable homotopy theory and motivic homotopy theory. 

Categories of enriched diagrams, in particular the category of simplicial functors $\sset_*^{\mathrm{fin}}\to \sset_*$, from finite pointed simplicial sets to pointed simplicial sets, also play a proeminent role in Goodwillie calculus. The projective model structure on this category has been established in \cite{CD}, and is used to develop a model theoretic framework for Goodwillie calculus, where $n$-exicisive approximations are seen as fibrant replacements. The injective one is the key of the definition of homotopy nilpotent groups by Biedermann and Dwyer \cite{BD}. These model structures are recovered by Theorem~\ref{injproj}, as explained in Example \ref{sSet}. The difficulty to find an explicit reference for the injective case in subsequent study of homotopy nilpotency by Chorny, Costoya, Scherer, and Viruel in \cite{CS} and \cite{CSV} was actually the starting point of this project. 

However, Theorem \ref{injproj} does not apply in as many situations as we wish. Therefore, we give other conditions in Theorem \ref{under} for the existence of these injective and projective model structures, in the more special case where $\Nu$ admits a model structure and the model structure on $\ca$ is enriched over $\Nu$. This result permits to recover, among others, the injective and projective model structures on categories of dg modules over a dg category given by Keller in \cite[Theorem 3.2]{Keller}, and the injective model structure on categories of modules over a symmetric ring spectrum of \cite[Corollary 5.0.1]{HKRS}.

All the examples given so far are combinatorial. Nevertheless, the Hurewicz model structure on a category of chain complexes provides an example of an accessible model structure which is not cofibrantly generated (see \cite{HKRS} and \cite{CH}). As discussed in Example~\ref{Hur}, the injective and projective model structures on a category of dg diagrams in chain complexes endowed with the Hurewicz model structure exist. This illustrates why the accessibility condition is more general than the combinatorial one in our setting.

\subsection*{The non-enriched case}
Before presenting an outline of our arguments in the enriched case, let us explain how the proof of the existence of the injective and projective model structures in the non-enriched case works, using the methods of lifting model structures of \cite{HKRS} and \cite{GKR}. This will help us to highlight the similarities, but also the key differences between the two situations. Given a model structure $(\cc, \cf, \cw)$ on a category $\cm$ and two adjunctions 
\begin{tz}
\node (D) at (-1,0) {$\ck$};
\node (M) at (2,0) {$\cm$};
\node (E) at (5,0) {$\cn$,};
\draw[->] (D) to [bend left=30] node[above] {$V$} (M);
\draw[->] (M) to [bend left=30] node[below] {$R$} (D);
\draw[->] (M) to [bend left=30] node[above] {$L$} (E);
\draw[->] (E) to [bend left=30] node[below] {$U$} (M);
\node at (0.5,0) {$\bot$};
\node at (3.5,0) {$\bot$};
\end{tz}
one could lift the model structure on $\cm$ along the left or right adjoint to build model structures on $\ck$ and $\cn$ respectively. In other words, the class of cofibrations and weak equivalences in the left-lifted model structure on $\ck$ are given by $V^{-1}\cc$ and $V^{-1}\cw$ respectively. Dually, the class of fibrations and weak equivalences in the right-lifted model structure on $\cn$ are given by $U^{-1}\cf$ and $U^{-1}\cw$ respectively. However, these lifted model structures do not always exist. Garner, K\polhk{e}dziorek, and Riehl give a proof of the Acyclicity Theorem in \cite[Corollary 2.7]{GKR}, which states that these lifted model structures exist, when~$\ck$ and~$\cn$ are locally presentable categories, $\cm$ is an accessible model category, and the \emph{Acyclicity conditions} hold. This result was initially proved in \cite[Corollary 3.3.4]{HKRS}, but with a stronger notion of accessibility. The Acyclicity conditions correspond to the following conditions:
\begin{rome}
\item the morphisms in $\ck$ which have the right lifting property with respect to $V^{-1} \cc$ are contained in $V^{-1}\cw$, for the left-lifting case, and
\item the morphisms in $\cn$ which have the left lifting property with respect to $U^{-1} \cf$ are contained in $U^{-1} \cw$, for the right-lifting case.
\end{rome}
\begin{comment}
\begin{theorem*}[\!{\cite[Corollary 3.3.4]{HKRS}}, {\cite[Corollary 2.7]{GKR}}] \label{th1}
Let $(\cc,\cf,\cw)$ be an accessible model structure on a category $\cm$, and let $\ck$ and $\cn$ be two locally presentable categories. Suppose we have adjunctions as above. Then 
\begin{rome}
\item The left-induced model structure on $\ck$ exists if and only if $(V^{-1} \cc)^{\lift} \subseteq V^{-1}\cw$, where $(V^{-1} \cc)^{\lift}$ denotes the class of morphisms in $\ck$ which have the right lifting property with respect to every morphism in $V^{-1}\cc$.
\item The right-induced model structure on $\cn$ exists if and only if $^{\lift} (U^{-1} \cf)\subseteq U^{-1} \cw$, where $^{\lift} (U^{-1} \cf)$ denotes the class of morphisms in $\cn$ which have the left lifting property with respect to every morphism in $U^{-1}\cf$.
\end{rome}
\end{theorem*}
The conditions on the left-hand side of the equivalences are called the \emph{Acyclicity conditions}. 
\end{comment}

In \cite[Theorem 3.4.1]{HKRS}, this Acyclicity Theorem is used to prove the existence of the injective and projective model structures on a category of diagrams $\cm^D$, where $D$ is any small category and $\cm$ is an accessible model category. The injective and projective model structures can be seen as left- and right-lifted model structures from the Kan extension adjunctions
\begin{tz}
\node (EC) at (-1,0) {$\cm^D$};
\node (A) at (2,0) {$\cm^{\ObD}$,};
\path[->] (EC) edge node[b] {$i^*$} (A);
\path[->] (A) edge [bend right=45] node[above] {$i_!$} (EC);
\path[->] (A) edge [bend left=45] node[below] {$i_*$} (EC);
\node at (0.5,0.5) {$\bot$};
\node at (0.5, -0.5) {$\bot$};
\end{tz}
where $\ObD$ denotes the discrete category of objects of $D$, $i\colon \ObD \to D$ is the canonical inclusion functor, and $\cm^{\ObD}$ has the pointwise model structure coming from the one on~$\cm$. In particular, the diagram categories $\cm^D$ and $\cm^{\ObD}$ satisfy the conditions of the Acyclicity Theorem. To see this, note that $\cm^D$ is locally presentable since $\cm$ is so, and that the pointwise model structure on $\cm^{\ObD}$ coming from the accessible model structure on $\cm$ is also accessible. Moreover, the Acyclicity conditions are straightforward, since every injective trivial fibration is in particular a pointwise trivial fibration, and dually every projective trivial cofibration is in particular a pointwise trivial cofibration. 

\subsection*{Outline}
In this paper, we apply the same methods in order to prove the existence of the injective and projective model structures on categories of enriched diagrams and enriched natural transformations. In order to apply the Acyclicity Theorem, things need to be formulated in an enriched setting. Suppose $(\Nu, \otimes, I)$ is a closed symmetric monoidal category. In Section \ref{sec2}, following \cite{BQR} and \cite{KEL2}, we introduce the notions of a locally presentable base $\Nu$, and of enriched local presentability for a $\Nu$-category~$\ca$, which implies the local presentability (in the non-enriched sense) of the category of enriched diagrams $D\to \ca$, where $D$ is a small $\Nu$-category. In Section~\ref{sec3}, we recall the notion of an accessible model category, and state the Acyclicity Theorem from \cite{GKR}. In Section~\ref{sec4}, we construct an enriched category of objects for a small $\Nu$-category~$D$, which plays a similar role to the discrete category of objects above, but in the enriched setting. Then, we prove Theorem \ref{injproj}, using the fact that the injective and projective model structures on a category of enriched diagrams $D\to \ca$ can be seen as left- and right-induced model structures from the enriched Kan extension adjunctions induced by the inclusion $\Nu$-functor $\ObD\to D$. The assumptions saying that tensoring with the hom-objects of~$D$ preserves cofibrations or trivial cofibrations in $\ca$ respectively, imply the Acyclicity conditions. More precisely, they imply that injective trivial fibrations are in particular pointwise trivial fibrations, and that projective trivial cofibrations are in particular pointwise trivial cofibrations respectively. In Section~\ref{sec5}, we introduce the notion of an enriched model category, and prove that the enrichment of the model structure on~$\ca$ transfers to the injective and projective model structures on a category of enriched diagrams $D\to \ca$. In this case, the assumptions on the hom-objects of $D$ for the existence of these model structures are always true when all hom-objects of $D$ are cofibrant in $\Nu$. In Section \ref{sec6}, we also state other conditions under which the Acyclicity Theorem applies when the model structure on $\ca$ is enriched, and the proof follows from the Cylinder and Path Object arguments of \cite{HKRS}. In particular, these conditions hold when all objects of $\ca$ are cofibrant or fibrant respectively, and the unit $I$ is cofibrant or fibrant in $\Nu$ respectively. In Section \ref{sec7}, we apply our results to categories of modules over an operad in $\Nu$, using their characterization in terms of $\Nu$-functors as given by Arone and Turchin in \cite{AT}. This application was suggested by Kathryn Hess. In an upcoming paper on configuration spaces of products with Ben Knudsen \cite{HK}, they are using these model structures on modules over simplicial operads. Finally, in Section~\ref{sec8}, we show that, if $\ca$ is left- or right-proper, then so are the injective and projective model structures on a category of enriched diagrams $D\to \ca$, again under assumptions on the hom-objects of $D$. This time, these assumptions imply that injective fibrations are in particular pointwise fibrations, and that projective cofibrations are in particular pointwise cofibrations respectively.

\subsection*{Notations}

Throughout the whole article, the following notations are used. Let $(\Nu, \otimes, I)$ be a closed symmetric monoidal category. There is a $2$-functor $(-)_0\colon \Nu$-$\CAT\to \CAT$ sending a $\Nu$-category $\ca$ to its underlying category $\ca_0$ which has the same objects as $\ca$ (see \cite[Proposition 3.5.10]{RIE}). Let $\ca$ be a $\Nu$-category. For $A,B\in \ca$, we denote by 
\begin{itemize}
\item $\ca(A,B)$, the hom-object in $\Nu$ from $A$ to $B$, and
\item $\ca_0(A,B)=\{ I\to \Hom_\ca(A,B)\}$, the underlying set of morphisms from $A$ to $B$.
\end{itemize}
Note that, since $\Nu$ is closed, it is enriched over itself, and its underlying category is $\Nu$ (see \cite[Lemma 3.4.9]{RIE}). If $D$ is a small $\Nu$-category, we denote by 
\begin{itemize}
\item $[D, \ca]$, the $\Nu$-category of $\Nu$-functors from $D$ to $\ca$, and
\item $[D, \ca]_0$, the ordinary category of $\Nu$-functors from $D$ to $\ca$, and $\Nu$-natural transformations between them. 
\end{itemize}

\subsection*{Acknowledgements}  

I would like to specially thank my advisor, Jérôme Scherer, for recommending me this problem, and for his help throughout the writing of this paper. I~would also like to thank Kathryn Hess, Magdalena K\polhk edziorek, Fernando Muro, Emily Riehl, and Oliver Röndigs for our many useful discussions which led to improvement and generalization of the results in this text. In particular, Magdalena K\polhk{e}dziorek recommended to use the Cylinder Object argument, which gave rise to the new conditions and applications in Section \ref{sec6}, and Kathryn Hess suggested the application of Section~\ref{sec7}. Finally, I would like to thank Alexander Campbell, and Irakli Patchkoria for suggesting other examples of applications, Kay Werndli for giving the idea for the proof of Proposition \ref{local}, and Dimitri Zaganidis for pointing out the reference to Lurie's theorem.

\section{Enriched local presentability} \label{sec2}

Let $(\Nu, \otimes, I)$ be a closed symmetric monoidal category. Given a $\Nu$-category $\ca$, we want to find conditions on $\Nu$ and~$\ca$ under which the category $[D, \ca]_0$ is locally presentable, for every small $\Nu$-category $D$. Based on \cite{BQR}, we define locally presentable bases, which are locally presentable closed symmetric monoidal categories whose local presentability is compatible with the monoidal structure. Then, we extend the notion of local presentability to the enriched setting. In particular, if $\Nu$ is a locally presentable base, every $\Nu$-category $[D,\Nu]$, where $D$ is a small $\Nu$-category, is locally presentable in the enriched sense. Moreover, since every enriched locally presentable $\Nu$-category can be seen as a full reflective $\Nu$-subcategory of a $\Nu$-category of the form $[D,\Nu]$ by a result in \cite{KEL2}, if $\ca$ is locally presentable in the enriched sense, then so is every $\Nu$-category of enriched diagrams in $\ca$. The fact that the underlying category of an enriched locally presentable $\Nu$-category is locally presentable implies finally that the category $[D,\ca]_0$ is locally presentable, for every small $\Nu$-category $D$.

\begin{defn} \label{base}
Let $\alpha$ be a regular cardinal. A \textbf{locally \text{$\alpha$-presentable} base} is a cocomplete closed symmetric monoidal category $(\Nu, \otimes, I)$ which admits a strongly generating family of $\text{$\alpha$-presentable}$ objects containing the unit $I$ and closed under tensor products. We~say that $\Nu$ is a \textbf{locally presentable base} if it is a locally $\alpha$-presentable base for some regular cardinal $\alpha$. 
\end{defn}

\begin{comment}
\begin{ex} \label{sset}
The categories $\sset$ and $\sset_*$ are locally presentable bases (see \cite[Example 1.4.c]{BQR}).
\end{ex}
\end{comment}

Let $(\Nu, \otimes, I)$ be a locally presentable base. We extend the notion of local presentability to the enriched setting with the help of enriched hom-functors and enriched colimits.

\begin{defn}
Let $\ca$ be a $\Nu$-category, and $\alpha$ be a regular cardinal. An object $A$ of $\ca$ is \textbf{$\alpha$-$\Nu$-presentable} if the representable $\Nu$-functor $\ca(A,-)\colon \ca\to \Nu$ preserves $\alpha$-filtered $\Nu$-colimits. 
\end{defn}

\begin{defn} \label{loc}
Let $\alpha$ be a regular cardinal. A $\Nu$-category $\ca$ is \textbf{locally $\alpha$-$\Nu$-presentable} if it is $\Nu$-cocomplete and admits a strongly $\Nu$-generating family of $\alpha$-$\Nu$-presentable objects. We say that $\ca$ is \textbf{locally $\Nu$-presentable} if it is locally $\alpha$-$\Nu$-presentable for some regular cardinal $\alpha$. 
\end{defn}

Similarly to the non-enriched case, there is a characterization of locally $\Nu$-presentable $\Nu$-categories in terms of full reflective $\Nu$-subcategories of some $\Nu$-category of enriched diagrams in $\Nu$, where reflective here means that the inclusion $\Nu$-functor has a left adjoint, and together they form an enriched adjunction. 

\begin{prop} [\!{\cite[Corollary 7.3]{KEL2}}] \label{criterion}
Let $\alpha$ be a regular cardinal. A $\Nu$-category~$\ca$ is locally $\alpha$-$\Nu$-presentable if and only if it is a full reflective $\Nu$-subcategory of some $\text{$\Nu$-category}$ $[K, \Nu]$, where $K$ is a small $\Nu$-category and the inclusion $\ca\to [ K, \Nu]$ preserves $\alpha$-filtered $\Nu$-colimits. 
\end{prop}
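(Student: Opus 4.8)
This statement is Kelly's characterisation of locally presentable enriched categories recast in the language of this paper, so the plan is to assemble it from the machinery of \cite{KEL2} rather than to reprove that machinery.

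For the ``if'' direction, I would first record that $[K,\Nu]$ is itself locally $\alpha$-$\Nu$-presentable: the enriched Yoneda lemma gives $[K,\Nu](K(k,-),F)\cong Fk$ naturally in $F$, so each representable $K(k,-)$ is $\alpha$-$\Nu$-presentable (evaluation at $k$ preserves all $\Nu$-colimits), and the family $\{K(k,-)\}_{k\in K}$ is strongly $\Nu$-generating. Given a full reflective $\Nu$-subcategory $j\colon\ca\to[K,\Nu]$ with $\Nu$-left adjoint $L\dashv j$ and with $j$ preserving $\alpha$-filtered $\Nu$-colimits, I would take $G_k := L(K(k,-))$ as candidate generators of $\ca$. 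Because $L\dashv j$ is a $\Nu$-adjunction, $\ca(G_k,-)\cong \mathrm{ev}_k\circ j$, which preserves $\alpha$-filtered $\Nu$-colimits since $j$ does and evaluation at $k$ does; hence each $G_k$ is $\alpha$-$\Nu$-presentable. Since $j$ is fully faithful and $L\dashv j$, the isomorphism $\ca(G_k,f)\cong[K,\Nu](K(k,-),jf)$ shows the family $\{G_k\}$ inherits strong $\Nu$-generation from $\{K(k,-)\}$. Finally $\ca$ is $\Nu$-cocomplete, being a reflective $\Nu$-subcategory of the $\Nu$-cocomplete $\Nu$-category $[K,\Nu]$ (one computes $\Nu$-colimits in $\ca$ by applying $L$ to $\Nu$-colimits in $[K,\Nu]$). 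Hence $\ca$ is locally $\alpha$-$\Nu$-presentable.

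For the ``only if'' direction, I would pick a small full $\Nu$-subcategory $\ca_\alpha\hookrightarrow\ca$ which is strongly $\Nu$-generating and consists of $\alpha$-$\Nu$-presentable objects, and then replace it by its closure under $\alpha$-small $\Nu$-colimits formed in $\ca$; this closure is still essentially small, still consists of $\alpha$-$\Nu$-presentable objects, and is now a \emph{dense} $\Nu$-subcategory of $\ca$. Setting $K:=\ca_\alpha^{\op}$, the restricted Yoneda $\Nu$-functor $E\colon\ca\to[K,\Nu]$, $EA=\ca(-,A)|_{\ca_\alpha}$, is fully faithful precisely because $\ca_\alpha$ is dense; it preserves $\alpha$-filtered $\Nu$-colimits because each object of $\ca_\alpha$ is $\alpha$-$\Nu$-presentable and $\Nu$-colimits in $[K,\Nu]$ are computed pointwise; and it admits a $\Nu$-left adjoint, namely the $\Nu$-left Kan extension of the inclusion $\ca_\alpha\hookrightarrow\ca$ along the Yoneda embedding $K^{\op}\to[K,\Nu]$ (equivalently, the $\Nu$-weighted colimit along $\ca_\alpha\hookrightarrow\ca$), which exists since $\ca$ is $\Nu$-cocomplete and is adjoint to $E$ by the universal property of Kan extensions along the Yoneda embedding. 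This exhibits $\ca$ as a full reflective $\Nu$-subcategory of $[K,\Nu]$ whose inclusion preserves $\alpha$-filtered $\Nu$-colimits.

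The main obstacle is the density step in the ``only if'' direction: showing that a strongly $\Nu$-generating family of $\alpha$-$\Nu$-presentable objects, once closed under $\alpha$-small $\Nu$-colimits, is dense — equivalently, that every object of $\ca$ is the canonical $\alpha$-filtered $\Nu$-colimit of the $\alpha$-$\Nu$-presentable objects over it, so that $\ca\simeq\Nu\text{-}\mathrm{Ind}_\alpha(\ca_\alpha)$. This is the genuinely enriched core of the result, and along the way one must also be careful that all the adjunctions and colimit computations involved are $\Nu$-enriched rather than merely ordinary. Both points are supplied by \cite[Corollary 7.3]{KEL2} together with the preceding results of that paper, which I would invoke rather than reprove.
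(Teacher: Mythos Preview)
The paper does not prove this proposition at all: it is stated with a citation to \cite[Corollary 7.3]{KEL2} and used as a black box in the proof of Proposition~\ref{local}. Your proposal goes well beyond what the paper does, sketching the actual content of Kelly's argument for both directions. The sketch is essentially correct and follows the standard line of reasoning---representables generate $[K,\Nu]$, reflections of generators generate a reflective subcategory, and in the converse direction the restricted Yoneda embedding along a dense small subcategory of $\alpha$-$\Nu$-presentable objects exhibits the required reflection. You are also right to flag the density step as the substantive enriched ingredient that one must import from \cite{KEL2}. Since the paper treats the result as a citation, there is nothing to compare your argument against; your write-up is simply more informative than the paper's treatment.
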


\begin{rem}
In particular, the $\Nu$-category $[D,\Nu]$ is locally $\Nu$-presentable for every small $\Nu$-category~$D$. 
\end{rem}

In the case of ordinary categories, a category of diagrams in a locally presentable category is also locally presentable. Using the characterization above, we show that it is also true in the enriched setting. 

\begin{prop} \label{local}
Let $\ca$ be a locally $\alpha$-$\Nu$-presentable $\Nu$-category, for some regular cardinal~$\alpha$, and let $D$ be a small $\Nu$-category. Then the $\Nu$-category $[D, \ca]$ is also locally $\alpha$-$\Nu$-presentable. 
\end{prop}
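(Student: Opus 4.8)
The plan is to exploit the characterization of locally $\alpha$-$\Nu$-presentable $\Nu$-categories given by Proposition \ref{criterion}, together with the fact that this property passes from $[K, \Nu]$ to $[D, [K, \Nu]]$, and is stable under passing to full reflective $\Nu$-subcategories closed under $\alpha$-filtered $\Nu$-colimits.

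First I would treat the case $\ca = [K, \Nu]$ for a small $\Nu$-category $K$. Here one uses the canonical isomorphism of $\Nu$-categories $[D, [K, \Nu]] \cong [D \otimes K, \Nu]$, where $D \otimes K$ denotes the tensor product of small $\Nu$-categories, which is again small. By the Remark following Proposition \ref{criterion}, $[D\otimes K, \Nu]$ is locally $\Nu$-presentable; but I should be slightly more careful and check that it is locally $\alpha$-$\Nu$-presentable for the \emph{same} $\alpha$ — this follows because the representable $\Nu$-functors $[D\otimes K, \Nu]((D\otimes K)(x,-), -)$ are evaluation $\Nu$-functors (enriched Yoneda), and these preserve all $\Nu$-colimits, hence in particular $\alpha$-filtered ones, and they form a strong $\Nu$-generator.

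Next, for general $\ca$, apply Proposition \ref{criterion} to get a full reflective $\Nu$-subcategory inclusion $j\colon \ca \to [K, \Nu]$ preserving $\alpha$-filtered $\Nu$-colimits, with left $\Nu$-adjoint $L$. Post-composition with $j$ and with $L$ induces a $\Nu$-adjunction $[D, L] \dashv [D, j]$ between $[D, \ca]$ and $[D, [K, \Nu]]$, and $[D, j]$ is again fully faithful (a $\Nu$-natural transformation between $\Nu$-functors into $\ca$ is the same data as one between the composites with the fully faithful $j$). Since $\Nu$-colimits in functor $\Nu$-categories are computed pointwise, and $j$ preserves $\alpha$-filtered $\Nu$-colimits pointwise, the inclusion $[D, j]$ preserves $\alpha$-filtered $\Nu$-colimits. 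Also $[D,\ca]$ is $\Nu$-cocomplete because $\ca$ is. Thus $[D, \ca]$ is exhibited as a full reflective $\Nu$-subcategory of the locally $\alpha$-$\Nu$-presentable $\Nu$-category $[D, [K,\Nu]]$ with inclusion preserving $\alpha$-filtered $\Nu$-colimits, so Proposition \ref{criterion} applies in the reverse direction to conclude that $[D, \ca]$ is locally $\alpha$-$\Nu$-presentable.

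The main obstacle I expect is the bookkeeping around the enriched adjunction $[D, L] \dashv [D, j]$: one must verify that $2$-functoriality of $[D, -]$ on $\Nu$-$\CAT$ genuinely sends a $\Nu$-adjunction to a $\Nu$-adjunction and preserves full faithfulness, and that ``full reflective $\Nu$-subcategory'' in the statement of Proposition \ref{criterion} really only requires an enriched reflection (not an idempotent one, etc.). A secondary subtlety is confirming the isomorphism $[D, [K, \Nu]] \cong [D \otimes K, \Nu]$ at the level of $\Nu$-categories — this is the enriched exponential law and should be cited or recalled (e.g.~from \cite{KEL2} or a standard reference on enriched category theory) rather than proved. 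Neither of these is deep, but they are where the argument must be pinned down carefully; the rest is a formal consequence of Proposition \ref{criterion}.
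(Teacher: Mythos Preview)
Your proposal is correct and follows essentially the same route as the paper: both arguments invoke Proposition~\ref{criterion} to embed $\ca$ as a full reflective $\Nu$-subcategory of some $[K,\Nu]$, apply the $2$-functor $[D,-]$ to obtain $[D,\ca]$ as a full reflective $\Nu$-subcategory of $[D,[K,\Nu]]\cong [D\otimes K,\Nu]$, and use pointwise computation of $\Nu$-colimits to see that the inclusion preserves $\alpha$-filtered $\Nu$-colimits. The paper packages your bookkeeping concerns into the single observation that $[D,-]\colon \Nu\text{-}\CAT\to\Nu\text{-}\CAT$ is a $2$-functor and hence preserves enriched adjunctions; your more explicit unpacking of the adjunction $[D,L]\dashv [D,j]$ and of full faithfulness is correct but not needed beyond that remark.
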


\begin{proof}
Consider the $2$-functor $[D,-]\colon \Nu$-$\CAT\to \Nu$-$\CAT$ sending a $\Nu$-category $\cb$ to the $\Nu$-category of $\Nu$-functors $[D,\cb]$. In particular, this $2$-functor preserves enriched adjunctions. Hence, since $\ca$ is a full reflective $\Nu$-subcategory of some $[K, \Nu]$, where $K$ is a small $\text{$\Nu$-category}$, by Proposition \ref{criterion}, it follows from applying $[D,-]$ that $[D, \ca]$ is a full reflective $\Nu$-subcategory of $[D, [K, \Nu]]\cong [D\otimes K, \Nu]$. Note that $D\otimes K$ is also a small $\text{$\Nu$-category}$. Moreover, since $\Nu$-colimits are computed pointwise in $[D, \ca]$ and in $[D, [K, \Nu]]$ (see \cite[Section 3.3]{KEL}), the induced inclusion $\Nu$-functor $[D, \ca]\to [D, [K, \Nu]]$ preserves $\alpha$-filtered $\Nu$-colimits, as the inclusion $\Nu$-functor $\ca\to [K,\Nu]$ does so by Proposition~\ref{criterion}. This shows that $[D, \ca]$ is also locally $\alpha$-$\Nu$-presentable.  
\end{proof}  

Finally, in order to show that $[D,\ca]_0$ is locally presentable when the $\Nu$-category $\ca$ is locally $\text{$\Nu$-presentable}$, it remains to state a last result saying that the underlying category of a locally $\Nu$-presentable $\Nu$-category is locally presentable in the ordinary sense. 

\begin{prop} [\!{\cite[Proposition 6.6]{BQR}} ] \label{pres}
Let $\ca$ be a $\Nu$-cocomplete $\Nu$-category and denote by $\ca_0$ the underlying category of $\ca$. If $\ca$ is locally $\alpha$-$\Nu$-presentable for some regular cardinal~$\alpha$, then $\ca_0$ is locally $\alpha$-presentable in the ordinary sense.
\end{prop}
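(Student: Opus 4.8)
The plan is to manufacture, out of the enriched data, a small family of ordinarily $\alpha$-presentable objects that strongly generates $\ca_0$, and then to invoke the standard characterization of locally $\alpha$-presentable categories as the cocomplete categories admitting such a family.

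First I would record that $\ca_0$ is cocomplete: since $\ca$ is $\Nu$-cocomplete it has all conical $\Nu$-colimits, and a conical $\Nu$-colimit is precisely an ordinary colimit in $\ca_0$ that is preserved by every representable $\Nu$-functor $\ca(-,A)$, so in particular $\ca_0$ has all small colimits. Next, let $\mathcal{V}_\alpha$ be the strongly generating family of $\alpha$-presentable objects of $\Nu$ supplied by Definition~\ref{base}, and let $\mathcal{G}$ be the strongly $\Nu$-generating family of $\alpha$-$\Nu$-presentable objects of $\ca$ supplied by Definition~\ref{loc}. I claim that
\[
\mathcal{H}\;=\;\{\,v\otimes G \ :\ v\in\mathcal{V}_\alpha,\ G\in\mathcal{G}\,\},
\]
where $v\otimes G$ denotes the copower, which exists because $\ca$ is $\Nu$-cocomplete, is a strong generator of $\ca_0$ consisting of $\alpha$-presentable objects.

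The computation underlying both halves of the claim is the natural isomorphism
\[
\ca_0(v\otimes G,A)\;\cong\;\Nu_0\big(v,\ca(G,A)\big),
\]
obtained by applying $\Nu_0(I,-)$ to the defining isomorphism $\ca(v\otimes G,A)\cong\Nu(v,\ca(G,A))$ of the copower and using that $\Nu$ is closed. For $\alpha$-presentability: an $\alpha$-filtered colimit in $\ca_0$ is a conical $\Nu$-colimit, so $\ca(G,-)$ — which preserves $\alpha$-filtered $\Nu$-colimits since $G$ is $\alpha$-$\Nu$-presentable — carries it to the corresponding colimit in $\Nu_0$; then $\Nu_0(v,-)$ preserves this colimit because $v$ is $\alpha$-presentable in $\Nu=\Nu_0$; composing and using the displayed isomorphism shows that $\ca_0(v\otimes G,-)$ preserves $\alpha$-filtered colimits. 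For the strong-generator property: if a morphism $f$ of $\ca_0$ has $\ca_0(v\otimes G,f)$ bijective for all $v\in\mathcal{V}_\alpha$ and $G\in\mathcal{G}$, then $\Nu_0(v,\ca(G,f))$ is bijective for all $v\in\mathcal{V}_\alpha$, whence $\ca(G,f)$ is an isomorphism of $\Nu$ because $\mathcal{V}_\alpha$ strongly generates $\Nu$; since this holds for every $G\in\mathcal{G}$ and $\mathcal{G}$ strongly $\Nu$-generates $\ca$, the morphism $f$ is an isomorphism. With $\ca_0$ cocomplete and $\mathcal{H}$ a strong generator of $\alpha$-presentable objects, the standard characterization of locally $\alpha$-presentable categories then yields that $\ca_0$ is locally $\alpha$-presentable.

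The step I expect to demand the most care is the last one: one must make precise that $\alpha$-filtered ordinary colimits in $\ca_0$ are conical $\Nu$-colimits, hence preserved by the underlying functor of any $\Nu$-colimit-preserving $\Nu$-functor, and — more delicately — one must unwind the exact meaning of ``strongly $\Nu$-generating'' in Definition~\ref{loc}, including the faithfulness clause of that condition, and transfer each part along the isomorphism $\ca_0(v\otimes G,-)\cong\Nu_0(v,\ca(G,-))$ using that $\mathcal{V}_\alpha$ strongly generates $\Nu$, so that it genuinely delivers an ordinary strong generator. Everything else is a formal consequence of the copower adjunction and the ordinary local presentability of $\Nu$.
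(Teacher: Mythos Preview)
The paper does not actually prove this proposition: it is quoted verbatim from \cite[Proposition~6.6]{BQR} and left without argument. So there is no ``paper's own proof'' to compare against; your proposal is an attempt to supply what the paper outsources. In fact your outline is close to how \cite{BQR} argues: one tensors the enriched strong generators of $\ca$ with the ordinary $\alpha$-presentable generators of $\Nu$ and checks that the resulting set does the job in $\ca_0$.

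There is, however, a genuine gap in how you locate the difficulty. In your final paragraph you say the delicate point is that ``$\alpha$-filtered ordinary colimits in $\ca_0$ are conical $\Nu$-colimits, hence preserved by the underlying functor of any $\Nu$-colimit-preserving $\Nu$-functor.'' But $\ca(G,-)$ is not a $\Nu$-colimit-preserving $\Nu$-functor: by hypothesis it preserves only \emph{$\alpha$-filtered $\Nu$-colimits}, and this is an enriched notion (colimits weighted by $\alpha$-flat weights, in the sense of Kelly and \cite{BQR}). That a conical colimit indexed by an ordinary $\alpha$-filtered category is an $\alpha$-filtered $\Nu$-colimit in this sense is not automatic; it is precisely here that the hypothesis that $\Nu$ is a locally $\alpha$-presentable base is used (see \cite[\S4 and Proposition~6.4]{BQR}). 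Without this input your chain
\[
\ca_0(v\otimes G,\operatorname{colim}_j A_j)\;\cong\;\Nu_0\big(v,\ca(G,\operatorname{colim}_j A_j)\big)\;\cong\;\Nu_0\big(v,\operatorname{colim}_j\ca(G,A_j)\big)
\]
does not go through at the second step. The remaining parts of your argument---cocompleteness of $\ca_0$, the copower adjunction, and the transfer of the strong-generation property---are fine once this is in place.
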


\begin{cor} \label{locpres}
Let $\ca$ be a locally $\Nu$-presentable $\Nu$-category, and let $D$ be a small $\text{$\Nu$-category}$. The category $[D, \ca]_0$ is locally presentable (in the ordinary sense).
\end{cor}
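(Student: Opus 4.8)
The plan is to assemble the statement directly from the results established earlier in the section. The hypotheses are that $\ca$ is a locally $\Nu$-presentable $\Nu$-category and $D$ is a small $\Nu$-category. By definition, $\ca$ is locally $\alpha$-$\Nu$-presentable for some regular cardinal $\alpha$. First I would invoke Proposition \ref{local}, which tells us that the $\Nu$-category $[D, \ca]$ is again locally $\alpha$-$\Nu$-presentable; in particular it is $\Nu$-cocomplete. This is the step that does the real work, and it has already been proved, so the corollary is essentially a matter of combining it with the passage to underlying categories.

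Next I would apply Proposition \ref{pres} to the $\Nu$-category $[D, \ca]$: since $[D, \ca]$ is $\Nu$-cocomplete and locally $\alpha$-$\Nu$-presentable, its underlying category $[D, \ca]_0$ is locally $\alpha$-presentable in the ordinary sense. Here one uses the standard identification of the underlying category of $[D, \ca]$ with the category $[D, \ca]_0$ of $\Nu$-functors $D \to \ca$ and $\Nu$-natural transformations, which is exactly the notation fixed in the Notations section. This yields the conclusion that $[D, \ca]_0$ is locally presentable.

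There is essentially no obstacle, since both ingredients are already available: the proof is just the composite ``$\ca$ locally $\Nu$-presentable $\Rightarrow$ $[D,\ca]$ locally $\Nu$-presentable $\Rightarrow$ $[D,\ca]_0$ locally presentable''. The only point requiring a word of care is that Proposition \ref{local} is stated for a fixed regular cardinal $\alpha$, so one should first extract such an $\alpha$ from the hypothesis that $\ca$ is locally $\Nu$-presentable, carry it through both propositions, and only at the end discard it to obtain the unqualified statement. I would write the proof in two or three sentences along these lines.

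\begin{proof}
Since $\ca$ is locally $\Nu$-presentable, it is locally $\alpha$-$\Nu$-presentable for some regular cardinal~$\alpha$. By Proposition \ref{local}, the $\Nu$-category $[D, \ca]$ is then also locally $\alpha$-$\Nu$-presentable; in particular, it is $\Nu$-cocomplete. Applying Proposition \ref{pres} to the $\Nu$-category $[D, \ca]$, we conclude that its underlying category $[D, \ca]_0$ is locally $\alpha$-presentable in the ordinary sense, and hence locally presentable.
\end{proof}
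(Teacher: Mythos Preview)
Your proof is correct and follows exactly the same approach as the paper, which simply says the result follows directly from Propositions \ref{local} and \ref{pres}. You have merely made explicit the two-step chain ``$\ca$ locally $\Nu$-presentable $\Rightarrow$ $[D,\ca]$ locally $\Nu$-presentable $\Rightarrow$ $[D,\ca]_0$ locally presentable'' that the paper's one-line proof leaves implicit.
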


\begin{proof}
This follows directly from Propositions \ref{local} and \ref{pres}.
\end{proof}

\section{Induced model structures} \label{sec3}

Given an adjunction between locally presentable categories
\begin{tz}
\node (D) at (-1,0) {$\ck$};
\node (M) at (2,0) {$\cm$,};
\draw[->] (D) to [bend left=30] node[above] {$V$} (M);
\draw[->] (M) to [bend left=30] node[below] {$R$} (D);
\node at (0.5,0) {$\bot$};
\end{tz}
an accessible model structure $(\cc,\cf,\cw)$ on $\cm$ can be lifted along the left adjoint $U$ to give rise to a model structure on $\ck$. This lifted model structure has $U^{-1}\cc$ as the class of cofibrations and $U^{-1}\cw$ as the class of weak equivalences, while the fibrations are defined by their lifting property with respect to trivial cofibrations. In particular, the injective model structure on a category of diagrams in an accessible model category can be seen as such a lifted model structure, since its cofibrations and weak equivalences are defined pointwise, and its fibrations are defined by their lifting property. Of course, there is a dual version to this construction, where the model structure is lifted along the right adjoint. The projective model structure on a category of diagrams in an accessible model category is then an example of this dual construction. For the enriched case, we want to apply the same methods. Hence, we recall in this section the notions of accessible weak factorization systems and accessible model categories, and state the results from \cite{GKR} about the existence of liftings of such weak factorization systems and model structures.

\begin{notation}
Let $\cc$ denote a class of morphisms in a category $\cm$. The class $\cc^{\,\lift}$ is the class of morphisms in $\cm$ which have the right lifting property with respect to all morphisms in $\cc$, and the class $^{\lift\,}\cc$ is the class of morphisms in $\cm$ which have the left lifting property with respect to all morphisms in $\cc$.
\end{notation}

A \emph{weak factorization system} $(\mathcal{L}, \mathcal{R})$ on a category $\cm$ consists of two classes of morphisms $\mathcal L$ and $\mathcal R$ in $\cm$ such that 
\[ \begin{array}{lcr}
\mathcal L = \, ^{\lift}\mathcal R & \text{ and } & \mathcal R = \mathcal L^{\,\lift},
\end{array} \]
and every morphism in $\cm$ factors as a morphism in $\mathcal L$ followed by a morphism in $\mathcal R$. In particular, if $(\cc, \cf, \cw)$ is a model structure on $\cm$, then $(\cc\cap \cw, \cf)$ and $(\cc, \cf\cap \cw)$ form weak factorization systems on $\cm$. In particular, the notion of accessible weak factorization systems induces the notion of accessibility for a model structure. 

\begin{defn}
A weak factorization system $(\mathcal{L}, \mathcal{R})$ on a category $\cm$ is \textbf{accessible} if~$\cm$ is locally presentable and there is a functorial factorization 
\[ \begin{array}{lcr}
A\stackrel{f}{\longrightarrow} B & \mapsto & A\stackrel{Lf}{\longrightarrow} Ef \stackrel{Rf}{\longrightarrow} B
\end{array} \]
with $Lf\in \mathcal{L}$ and $Rf\in \mathcal{R}$ such that the functor $E\colon \cm^2\to \cm$ is accessible, i.e.~it preserves $\alpha$-filtered colimits for some regular cardinal $\alpha$. 
\end{defn}

\begin{defn}
A model category $(\cm, \cc, \cf, \cw)$ is \textbf{accessible} if the weak factorization systems $(\cc\cap \cw, \cf)$ and $(\cc, \cf\cap \cw)$ are accessible.
\end{defn}

\begin{rem}
Given the definition of an accessible weak factorization system, an accessible model category is in particular locally presentable. 
\end{rem}

By results in \cite{GKR}, a weak factorization system can be lifted along the left or right adjoint of an adjunction when it is accessible. 

\begin{defn} \label{lifts}
Let $(\mathcal L, \mathcal R)$ be a weak factorization system on $\cm$ and suppose we have the following adjunctions 
\begin{tz}
\node (D) at (-1,0) {$\ck$};
\node (M) at (2,0) {$\cm$};
\node (E) at (5,0) {$\cn$.};
\draw[->] (D) to [bend left=30] node[above] {$V$} (M);
\draw[->] (M) to [bend left=30] node[below] {$R$} (D);
\draw[->] (M) to [bend left=30] node[above] {$L$} (E);
\draw[->] (E) to [bend left=30] node[below] {$U$} (M);
\node at (0.5,0) {$\bot$};
\node at (3.5,0) {$\bot$};
\end{tz}
The \textbf{left-lifting} of $(\mathcal L, \mathcal R)$ along $V$, if it exists, is the weak factorization system on $\ck$ given by
\[ (\Ll, \Rl)=(V^{-1}\mathcal L, (V^{-1}\mathcal L)^{\lift}).\] 
The \textbf{right-lifting} of $(\mathcal L, \mathcal R)$ along $U$, if it exists, is the weak factorization system on $\cn$ given by
\[ (\Lr, \Rr)=(^{\lift} (U^{-1}\mathcal R), U^{-1}\mathcal R).\]
\end{defn}

\begin{theorem}[\!{\cite[Theorem 2.6]{GKR}}] \label{existence}
Let $(\mathcal L, \mathcal R)$ be an accessible weak factorization system on $\cm$, and let $\ck$ and $\cn$ be locally presentable categories. Suppose we have adjunctions as in Definition \ref{lifts}. Then $(\mathcal L, \mathcal R)$ admits a left-lifting along $V$ and a right-lifting along $U$. 
\end{theorem}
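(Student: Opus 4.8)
\textbf{Proof proposal for Theorem \ref{existence}.}

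The plan is to reduce the existence of both liftings to a single known result: the small object argument (or its cofibrantly-generated-free refinement) applied to \emph{accessible} weak factorization systems. The key structural fact I would invoke is that every accessible weak factorization system $(\mathcal L, \mathcal R)$ on a locally presentable category $\cm$ is \emph{cofibrantly generated} in the weak sense that $\mathcal R = \mathcal{J}^{\,\lift}$ for a \emph{small} set $\mathcal J$ of morphisms (equivalently, $(\mathcal L, \mathcal R)$ is the algebraic weak factorization system cofibrantly generated by a small category of arrows), together with the fact that the functorial factorization witnessing accessibility is built from an accessible pointed endofunctor. This is precisely the content that \cite{GKR} establishes via the theory of accessible algebraic weak factorization systems; so the first step is to record this characterization and cite it.

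Next I would treat the right-lifting along $U$. Given the accessible generating data $\mathcal J$ on $\cm$, set $L\mathcal J := \{ Lj : j \in \mathcal J\}$, the set of arrows in $\cn$ obtained by applying the left adjoint $L$ to the generators. Since $L$ is a left adjoint and $U$ is its right adjoint, the adjunction isomorphism on lifting problems gives $(L\mathcal J)^{\,\lift} = U^{-1}(\mathcal J^{\,\lift}) = U^{-1}\mathcal R$. Because $\cn$ is locally presentable, it is in particular cocomplete and the domains of the arrows in $L\mathcal J$ are small (they are images under a left adjoint, hence the relevant presentability is controlled); the small object argument then produces a functorial factorization of every arrow of $\cn$ as a map in ${}^{\lift}\bigl((L\mathcal J)^{\,\lift}\bigr)$ followed by a map in $(L\mathcal J)^{\,\lift} = U^{-1}\mathcal R$. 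Moreover the small object argument construction is accessible: it is a transfinite composite of pushouts of coproducts of the generators, so the resulting functor $E$ preserves $\mu$-filtered colimits for a suitable regular cardinal $\mu$. Taking $\Rr := U^{-1}\mathcal R$ and $\Lr := {}^{\lift}(U^{-1}\mathcal R)$, the retract argument upgrades the left class of the generated factorization to all of $\Lr$, so $(\Lr, \Rr)$ is a weak factorization system on $\cn$, and it is accessible. This establishes the right-lifting.

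For the left-lifting along $V$ the argument is formally dual but genuinely on the other side of the adjunction, so I would either dualize or argue directly. The direct route: one wants $\Ll = V^{-1}\mathcal L$ to be the left class of a weak factorization system on $\ck$, i.e.~one needs a functorial factorization $A \to A' \to B$ with $A \to A'$ in $V^{-1}\mathcal L$ and $A' \to B$ in $(V^{-1}\mathcal L)^{\,\lift}$. Here one cannot simply transport generators, since $V$ is the \emph{left} adjoint; instead one uses that $(\mathcal L, \mathcal R)$, being accessible, is also \emph{fibrantly} controlled in the sense that $\mathcal L = {}^{\lift}\mathcal R$ with $\mathcal R$ itself accessibly generated, and one runs the ``algebraic injectives'' / cofree construction in $\ck$ dual to the small object argument, which exists because $\ck$ is locally presentable (hence the dual smallness hypotheses, on quotients rather than subobjects, are automatic). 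Concretely this is where \cite[Theorem 2.6]{GKR} does the real work — it packages the left-lifting as a right-lifting of the opposite weak factorization system along $V^{\op}$, which is a right adjoint $\cm^{\op} \to \ck^{\op}$, and invokes the same accessible-small-object machinery in the opposite category; local presentability is not self-dual, but the relevant constructions in \cite{GKR} are set up (via accessible categories and the theory of \cite{HKRS} refined by \cite{GKR}) to go through. So I would carry out the right-lifting in detail and then deduce the left-lifting by this opposite-category reformulation, citing \cite{GKR} for the fact that the accessible algebraic weak factorization system formalism is closed under passing to opposites of locally presentable categories equipped with the requisite generating data.

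The main obstacle is the left-lifting: the small object argument is not literally self-dual because ``locally presentable'' is not a self-dual notion, so one must be careful that the dual construction (cofree algebraic weak factorization systems, or equivalently the construction of algebraically-injective replacements) actually terminates and is accessible in $\ck$. The clean way around this — and the way I would present it — is to not reprove this but to cite \cite[Theorem 2.6]{GKR} for exactly this point, after having set up the adjunction bookkeeping ($L\mathcal J$ generates $U^{-1}\mathcal R$; dually on $V$) that reduces our situation to their hypotheses; the only thing requiring a word of justification on our side is that $\ck$ and $\cn$ being locally presentable and $\cm$ carrying an accessible weak factorization system are precisely the input to their theorem, which is immediate.
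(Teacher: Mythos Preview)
The paper does not prove this theorem; it simply cites \cite[Theorem 2.6]{GKR} and uses it as a black box. So there is no ``paper's own proof'' to compare against---the paper's approach is precisely to invoke the cited result.

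That said, your sketch has a genuine gap. You claim that an accessible weak factorization system on a locally presentable $\cm$ has $\mathcal R = \mathcal J^{\,\lift}$ for a small \emph{set} $\mathcal J$, and then run the classical small object argument on $L\mathcal J$. This is false in general: accessible weak factorization systems need not be cofibrantly generated by a set of maps. The paper itself highlights this in Example~\ref{Hur} (the Hurewicz model structure on $\ChR$ is accessible but not cofibrantly generated), and the whole point of \cite{GKR} over earlier work is to handle precisely this situation. What is true is that an accessible wfs underlies an accessible \emph{algebraic} weak factorization system, generated (in Garner's sense) by a small \emph{category} of arrows; the transfer then goes through Garner's algebraic small object argument, not Quillen's. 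Your parenthetical hints at this, but your actual argument for the right-lifting treats $\mathcal J$ as a set and invokes the classical construction, which only recovers the combinatorial case.

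For the left-lifting you correctly identify the obstacle (local presentability is not self-dual, so one cannot simply dualize), but your resolution is to cite \cite{GKR} for exactly the statement you are trying to prove. That is not a proof; it is the paper's own strategy of citing the result. If you want to sketch the actual content, the mechanism in \cite{GKR} is that accessible awfs on locally presentable categories lift along both adjoints because the relevant (co)monad constructions stay within the world of accessible functors on locally presentable categories---this requires the algebraic framework and is not a formal duality argument.
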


Suppose we are given adjunctions as in Definition \ref{lifts}, and suppose moreover that there is a model structure $(\cc, \cf,\cw)$ on $\cm$. The \emph{left-induced model structure} on $\ck$, if it exists, is given by $(V^{-1}\cc, (V^{-1}(\cc\cap\cw))^{\lift},V^{-1}\cw)$, and the \emph{right-induced model structure} on~$\cn$, if it exists, is given by $(^{\lift}(U^{-1}(\cf\cap\cw)), U^{-1}\cf,U^{-1}\cw)$. The existence of these induced model structures in the case of an accessible model category is given by the Acyclicity conditions. 

\begin{theorem} [\!{\cite[Corollary 3.3.4]{HKRS}}, {\cite[Corollary 2.7]{GKR}}] \label{ind}
Let $(\cm, \cc, \cf, \cw)$ be an accessible model category, and let $\ck$ and $\cn$ be two locally presentable categories. Suppose we have the following adjunctions
\begin{tz}
\node (D) at (-1,0) {$\ck$};
\node (M) at (2,0) {$\cm$};
\node (E) at (5,0) {$\cn$.};
\draw[->] (D) to [bend left=30] node[above] {$V$} (M);
\draw[->] (M) to [bend left=30] node[below] {$R$} (D);
\draw[->] (M) to [bend left=30] node[above] {$L$} (E);
\draw[->] (E) to [bend left=30] node[below] {$U$} (M);
\node at (0.5,0) {$\bot$};
\node at (3.5,0) {$\bot$};
\end{tz}
\begin{rome}
\item The left-induced model structure on $\ck$ exists if and only if 
\[ (V^{-1} \cc)^{\lift} \subseteq V^{-1}\cw. \]
\item The right-induced model structure on $\cn$ exists if and only if 
\[ ^{\lift} (U^{-1} \cf)\subseteq U^{-1} \cw. \]
\end{rome}
\end{theorem}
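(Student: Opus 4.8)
The plan is to derive both statements from the Acyclicity Theorem combined with the lifting-of-weak-factorization-systems result Theorem \ref{existence}, treating the two parts dually and in parallel. First I would observe that, since $(\cm,\cc,\cf,\cw)$ is an accessible model category, both weak factorization systems $(\cc\cap\cw,\cf)$ and $(\cc,\cf\cap\cw)$ are accessible; as $\ck$ and $\cn$ are locally presentable, Theorem \ref{existence} applies and provides, for each of these two systems, both a left-lifting along $V$ and a right-lifting along $U$. So on $\ck$ we obtain two weak factorization systems $(V^{-1}\cc,(V^{-1}\cc)^{\lift})$ and $(V^{-1}(\cc\cap\cw),(V^{-1}(\cc\cap\cw))^{\lift})$, and dually on $\cn$ the two systems $({}^{\lift}(U^{-1}\cf),U^{-1}\cf)$ and $({}^{\lift}(U^{-1}(\cf\cap\cw)),U^{-1}(\cf\cap\cw))$. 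This gives us all the candidate classes of (trivial) cofibrations and (trivial) fibrations for the induced model structures for free; what remains is to verify that, together with $V^{-1}\cw$ (resp.\ $U^{-1}\cw$) as weak equivalences, they satisfy the model category axioms, and to see that this holds precisely under the stated Acyclicity conditions.

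For part (i), I would focus on the left-induced structure on $\ck$ with cofibrations $V^{-1}\cc$, weak equivalences $V^{-1}\cw$, and fibrations $(V^{-1}(\cc\cap\cw))^{\lift}$. The two-out-of-three property and closure under retracts for $V^{-1}\cw$ are immediate from the corresponding properties in $\cm$ together with the fact that $V$ preserves retracts and composites. The two nontrivial points are: (a) that the two weak factorization systems above are exactly the $(\text{cof}\cap\text{we},\text{fib})$ and $(\text{cof},\text{fib}\cap\text{we})$ systems of a model structure, which amounts to showing the identifications $V^{-1}\cc \cap V^{-1}\cw = V^{-1}(\cc\cap\cw)$ (trivial, being just $V^{-1}(\cc\cap\cw)$ since $V^{-1}\cc\cap V^{-1}\cw = V^{-1}(\cc\cap\cw)$) and, crucially, $(V^{-1}\cc)^{\lift} = (V^{-1}(\cc\cap\cw))^{\lift}\cap V^{-1}\cw$; and (b) matching fibrations across the two systems. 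The inclusion $(V^{-1}\cc)^{\lift}\subseteq (V^{-1}(\cc\cap\cw))^{\lift}$ is automatic from $V^{-1}(\cc\cap\cw)\subseteq V^{-1}\cc$; the reverse inclusion, intersected with $V^{-1}\cw$, together with the statement $(V^{-1}\cc)^{\lift}\subseteq V^{-1}\cw$, is exactly the content of the Acyclicity condition. So the heart of the argument is: assuming $(V^{-1}\cc)^{\lift}\subseteq V^{-1}\cw$, show $(V^{-1}(\cc\cap\cw))^{\lift}\cap V^{-1}\cw \subseteq (V^{-1}\cc)^{\lift}$. The standard retract-argument trick does this: given $p$ in the left side, factor it through the $(V^{-1}\cc,(V^{-1}\cc)^{\lift})$ factorization as $p = q\circ j$ with $j\in V^{-1}\cc$ and $q\in (V^{-1}\cc)^{\lift}\subseteq V^{-1}\cw$; then $j\in V^{-1}\cw$ by two-out-of-three, so $j\in V^{-1}(\cc\cap\cw)$ and hence $j$ lifts against $p$, exhibiting $p$ as a retract of $q$ and so $p\in (V^{-1}\cc)^{\lift}$. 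Conversely, for the ``only if'' direction, if the left-induced model structure exists its trivial fibrations $(V^{-1}\cc)^{\lift}$ are in particular weak equivalences $V^{-1}\cw$, giving the Acyclicity condition immediately. Part (ii) is the formal dual, replacing $V$ by $U$, left by right, $\cc$ by $\cf^{\op}$-style classes, and using the $({}^{\lift}(U^{-1}(\cf\cap\cw)),U^{-1}(\cf\cap\cw))$ and $({}^{\lift}(U^{-1}\cf),U^{-1}\cf)$ systems.

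The main obstacle I anticipate is not any single hard step but rather bookkeeping: making sure that the two weak factorization systems produced by Theorem \ref{existence} really do assemble into a model structure in the sense that their left and right classes interlock correctly (i.e.\ verifying the ``overlap'' axioms $\mathcal{C}\cap\mathcal{W} = $ left class of one system, $\mathcal{F}\cap\mathcal{W}=$ right class of the other), and that the functorial factorizations exist on the nose on $\ck$ and $\cn$ — but the latter is exactly what the accessibility hypothesis buys us, since accessible weak factorization systems come with functorial factorizations and Theorem \ref{existence} transports them. Since this theorem is quoted directly from \cite{HKRS} and \cite{GKR}, the cleanest exposition is to cite those sources for the technical lifting machinery and present only the retract-argument reduction showing that the Acyclicity condition is both necessary and sufficient, which is the genuinely illuminating content; I would keep the written proof short and point to \cite[Corollary 3.3.4]{HKRS} and \cite[Corollary 2.7]{GKR} for the remaining verifications.
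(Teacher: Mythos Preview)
Your plan is correct and is precisely the argument given in the cited references \cite{HKRS} and \cite{GKR}; note, however, that the paper itself does not prove this theorem at all --- it is stated purely as a citation of \cite[Corollary 3.3.4]{HKRS} and \cite[Corollary 2.7]{GKR}, with no accompanying proof. Your retract-argument reduction of the sufficiency direction to Theorem~\ref{existence} is exactly the content of those sources, so there is nothing to compare against in the present paper.
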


\section{Injective and projective model structures} \label{sec4}

In this section, we prove the main theorem (Theorem \ref{injproj}) on the existence of injective and projective model structures for enriched diagram categories. Let $(\Nu,\otimes,I)$ be a locally presentable base. Suppose $\ca$ is a locally $\Nu$-presentable $\Nu$-category with an accessible model structure on its underlying category $\ca_0$, and let $D$ be a small $\text{$\Nu$-category}$. The injective and projective model structures on $[D,\ca]_0$ can be seen as left- and right-induced model structures from the one on a category of diagrams $[\ObD,\ca]_0$, which admits a pointwise model structure coming from the one of $\ca_0$. The first step is to define this $\Nu$-category $\ObD$ of objects of $D$ in such a way that it plays a similar role to the discrete category of objects in the non-enriched case. Then it remains to find conditions which imply the Acyclicity conditions of Theorem \ref{ind}. In the non-enriched case, these conditions are straightforward since injective trivial fibrations are in particular pointwise trivial fibrations, and dually projective trivial cofibrations are in particular pointwise trivial cofibrations. This follows among others from the fact that the coproduct over a set of a (trivial) cofibration is also a (trivial) cofibration. But the coproduct by a set corresponds to a tensor over the category of sets, for every ordinary category. Generalizing this to the enriched setting, the Acyclicity conditions for $[D,\ca]_0$ are satisfied whenever tensoring with the hom-objects of $D$ preserves cofibrations or trivial cofibrations respectively. Under these mild conditions, we can prove that the injective and projective model structures on $[D,\ca]_0$ exist. 

\begin{notation}
We denote by $\varnothing$ the initial object of $\Nu$, which exists since $\Nu$ is cocomplete. 
\end{notation}

\begin{defn} \label{enrichedob}
Let $D$ be a small $\Nu$-category. The \textbf{enriched category of objects} of~$D$ is the $\Nu$-category $\ObD$ with the same objects as $D$, in which the hom-objects are given by
\[ \begin{array}{lcr}
\ObD(d,d') = \varnothing, & \text{ and } & \ObD(d,d) = I,
\end{array} \]
for every $d\neq d'\in D$, and the identity morphisms are given by
\[ \id_d=\id_I\colon I\longrightarrow \ObD(d,d)=I, \]
for every $d\in D$. There is an inclusion $i\colon \ObD \to D$ given by the $\Nu$-functor which is the identity on objects, and where
\[ i_{d,d'}\colon \ObD(d,d')=\varnothing \longrightarrow D(d,d') \]
corresponds to the unique morphism in $\Nu$ from the initial object to $D(d,d')$, for every~${d\neq d'\in D}$, and
\[ i_{d,d}\colon \ObD(d,d)=I\longrightarrow D(d,d) \]
corresponds to the identity morphism $\id_d$ in $D$, for every $d\in D$.  
\end{defn}

The following lemma motivates the definition of the enriched category of objects by saying that the $\Nu$-category $\ObD$ in the enriched case plays a role similar to the one of the discrete category of objects in the non-enriched case. 

\begin{lemme}
Let $\ca$ be a $\Nu$-category, and let $D$ be a small $\Nu$-category. Then 
\begin{rome}
\item a $\Nu$-functor $F\colon \ObD\to \ca$ corresponds to a family of objects $\{ Fd\}_{d\in D}$ in $\ca$, and
\item a $\Nu$-natural transformation $\alpha\colon F\Rightarrow G$ in $[\ob D, \ca]_0$ corresponds to a family of morphisms $\{\alpha_d\colon Fd\to Gd\}_{d\in D}$ in $\ca_0$, without further conditions. 
\end{rome}
\end{lemme}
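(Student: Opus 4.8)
The plan is to unwind the definitions on both sides, exploiting that $\ObD$ has only identity morphisms (in the enriched sense) beyond the initial hom-objects, so that the $\Nu$-functoriality and $\Nu$-naturality conditions become vacuous.

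For (i), a $\Nu$-functor $F\colon\ObD\to\ca$ consists of an assignment $d\mapsto Fd$ on objects together with morphisms $F_{d,d'}\colon\ObD(d,d')\to\ca(Fd,Fd')$ in $\Nu$, subject to compatibility with composition and units. First I would observe that for $d\neq d'$ the hom-object $\ObD(d,d')=\varnothing$ is initial in $\Nu$, so $F_{d,d'}$ is the unique morphism $\varnothing\to\ca(Fd,Fd')$ and carries no data. Next, for $d=d'$, the map $F_{d,d}\colon I=\ObD(d,d)\to\ca(Fd,Fd)$ must send the identity $\id_I$ to the identity of $Fd$ by the unit axiom; since $I$ is the unit, a morphism $I\to\ca(Fd,Fd)$ is an element of $\ca_0(Fd,Fd)$, and the unit axiom pins it down to $\id_{Fd}$. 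Finally, the composition axiom involves morphisms out of $\ObD(d',d'')\otimes\ObD(d,d')$; whenever two indices differ this tensor has an initial factor, hence is initial (as $-\otimes X$ is a left adjoint, it preserves the initial object), so the composition axiom is automatic off the diagonal, and on the diagonal it reduces to $\id_{Fd}\circ\id_{Fd}=\id_{Fd}$. Thus $F$ is determined by, and determines, the family $\{Fd\}_{d\in D}$.

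For (ii), a $\Nu$-natural transformation $\alpha\colon F\Rightarrow G$ between such $\Nu$-functors is a family of components $\alpha_d\in\ca_0(Fd,Gd)$, i.e.\ morphisms $\alpha_d\colon Fd\to Gd$ in $\ca_0$, subject to the $\Nu$-naturality squares indexed by the hom-objects $\ObD(d,d')$. I would note that these squares live over $\ObD(d,d')$: for $d\neq d'$ the square has domain $\varnothing$ and commutes trivially, and for $d=d'$ the square has domain $I$ and, since $F_{d,d}$ and $G_{d,d}$ pick out identities, it reduces to $\alpha_d=\alpha_d$. Hence there are no further conditions, and $\Nu$-natural transformations $F\Rightarrow G$ are exactly arbitrary families $\{\alpha_d\colon Fd\to Gd\}_{d\in D}$ of morphisms in $\ca_0$.

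The only mild subtlety — and the closest thing to an obstacle — is the verification that $\ObD(d',d'')\otimes\ObD(d,d')$ is initial whenever at least one factor is, which is needed to discharge the composition axiom; this follows immediately from the fact that $\Nu$ is closed monoidal, so $-\otimes X$ preserves colimits and in particular the initial object. Everything else is a routine matter of reading the axioms for enriched functors and enriched natural transformations (e.g.\ as in \cite{RIE}) against the explicit description of $\ObD$ in Definition \ref{enrichedob}. I would write the argument compactly, treating the off-diagonal and diagonal cases in parallel for both parts.
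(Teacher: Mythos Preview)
Your proposal is correct and follows essentially the same approach as the paper: both arguments unwind the definitions, use initiality of the off-diagonal hom-objects to make those structure maps unique, pin down the diagonal maps via the unit axiom, and observe that the $\Nu$-naturality squares are automatic. You are slightly more thorough than the paper in explicitly discharging the composition axiom (via $\varnothing\otimes X\cong\varnothing$ from closedness of $\Nu$), which the paper leaves implicit.
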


\begin{proof}
Let $F\colon \ObD \to \ca$ be a $\Nu$-functor. For every $d\neq d'\in D$, the morphism 
\[ F_{d,d'}\colon \ObD(d,d')=\varnothing \longrightarrow \ca(Fd,Fd') \]
corresponds to the unique morphism in $\Nu$ from the initial object to $\ca(Fd,Fd')$ and, for every $d\in D$, the morphism 
\[ F_{d,d}\colon \ObD(d,d)=I \longrightarrow \ca(Fd,Fd) \]
is the identity morphism $\id_{Fd}$ in $\ca$, since the following diagram commutes
\begin{tz}
\node (I) at (-1,0) {$I$};
\node (D) at (2,1) {$\ObD(d,d)=I$};
\node (V) at (2,-1) {$\ca(Fd,Fd)$.};

\draw[->] (I) to node[above] {$\id_d=\id_I\,\,\,\,\,\,\,\,\,\,\,\,$} (D);
\draw[->] (D) to node[right] {$F_{d,d}$} (V);
\draw[->] (I) to node[below] {$\id_{Fd}$} (V);
\end{tz}
Since these morphisms between the hom-objects are uniquely determined, the $\Nu$-functor~$F$ corresponds to the family of objects $\{ Fd\}_{d\in D}$ in $\ca$.

Let $\alpha\colon F\Rightarrow G$ be a $\Nu$-natural transformation in $[\ob D, \ca]_0$. Recall that a morphism $I\to \ca(Fd,Gd)$ in $\Nu$ corresponds to a morphism $Fd\to Gd$ in $\ca_0$, for every $d\in D$, by definition of the underlying category of $\ca$. Moreover, the following diagrams trivially commute, for every $d\neq d'\in D$. 
\begin{tz}
\node (A) at (0,2) {$\ObD(d,d')=\varnothing$};
\node (B) at (4,2) {$\ca(Fd,Fd')$};
\node (C) at (0,0) {$\ca(Gd,Gd')$};
\node (D) at (4,0) {$\ca(Fd,Gd')$};

\draw[->] (A) to node[above] {$F_{d,d'}$} (B);
\draw[->] (A) to node[left] {$G_{d,d'}$} (C);
\draw[->] (B) to node[right] {$(\alpha_{d'})_*$} (D);
\draw[->] (C) to node[below] {$(\alpha_d)^*$} (D);

\node (A) at (7.5,2) {$\ObD(d,d)=I$};
\node (B) at (12,2) {$\ca(Fd,Fd)$};
\node (C) at (7.5,0) {$\ca(Gd,Gd)$};
\node (D) at (12,0) {$\ca(Fd,Gd)$};

\draw[->] (A) to node[above] {$F_{d,d}=\id_{Fd}$} (B);
\draw[->] (A) to node[left] {$G_{d,d}=\id_{Gd}$} (C);
\draw[->] (B) to node[right] {$(\alpha_{d})_*$} (D);
\draw[->] (C) to node[below] {$(\alpha_d)^*$} (D);
\draw[->] (A) to node[above] {$\alpha_d$} (D);
\end{tz}
This shows that $\alpha\colon F\Rightarrow G$ corresponds to a family of morphisms $\{\alpha_d\colon Fd\to Gd\}_{d\in D}$ in $\ca_0$, without further conditions.
\end{proof}

Hence, if the underlying category $\ca_0$ of $\ca$ admits a model structure, there is a model structure on $[\ob D, \ca]_0$ coming from the one of~$\ca_0$, in which cofibrations, fibrations and weak equivalences are defined pointwise.  

Moreover, if $\ca$ is $\Nu$-complete and $\Nu$-cocomplete, we have the following adjunctions
\begin{tz}
\node (EC) at (-1,0) {$[D, \ca]_0$};
\node (ED) at (2,0) {$[\ob D, \ca]_0$,};
\path[->] (EC) edge node[b] {$i^*$} (ED);
\path[->] (ED) edge [bend right=45] node[above] {$i_!$} (EC);
\path[->] (ED) edge [bend left=45] node[below] {$i_*$} (EC);
\node at (0.5,0.5) {$\bot$};
\node at (0.5, -0.5) {$\bot$};
\end{tz}
where $i^*\colon [D,\ca]_0 \rightarrow [\ObD, \ca]_0$ is the precomposition functor by $i\colon \ObD\to D$, and 
\[ i_!, i_*\colon [\ObD,\ca]_0 \longrightarrow [D,\ca]_0 \]
are the underlying functors of the \emph{enriched} left and right Kan extension functors. Our aim is to lift the injective and projective model structures on $[D, \ca]_0$ from the pointwise model structure on~$[\ob D, \ca]_0$ through these adjunctions. 

The final step is to find conditions that imply the Acyclicity conditions. These conditions are about tensoring with the hom-objects of $D$. Note that, if $\ca$ is $\Nu$-complete and $\Nu$-cocomplete, it is tensored and cotensored over $\Nu$, since these are particular enriched colimits and limits. 

\begin{theorem} \label{injproj}
Let $(\Nu, \otimes, I)$ be a locally presentable base. Suppose $\ca$ is a $\Nu$-complete locally $\Nu$-presentable $\Nu$-category such that its underlying category $\ca_0$ admits an accessible model structure, and let $D$ be a small $\Nu$-category. 
\begin{rome}
\item If the functors $-\otimes D(d,d')\colon \ca_0\to \ca_0$ preserve cofibrations for all $d,d'\in D$, the injective model structure on the category $[D, \ca]_0$ exists. 
\item If the functors $-\otimes D(d,d')\colon \ca_0\to \ca_0$ preserve trivial cofibrations for all ${d,d'\in D}$, the projective model structure on the category $[D, \ca]_0$ exists. 
\end{rome}
\end{theorem}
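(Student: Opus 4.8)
The plan is to realize the injective and projective model structures on $[D,\ca]_0$ as, respectively, the left-induced and right-induced model structures from the pointwise model structure on $[\ObD,\ca]_0$ along the Kan extension adjunctions $i_! \dashv i^* \dashv i_*$, and then to invoke Theorem \ref{ind}. First I would check the standing hypotheses of Theorem \ref{ind}: the pointwise model structure on $[\ObD,\ca]_0$ is accessible because $[\ObD,\ca]_0 \cong \prod_{d\in D}\ca_0$ is a finite (indeed small) product of copies of the accessible model category $\ca_0$, and products of accessible model structures are accessible; and both $[D,\ca]_0$ and $[\ObD,\ca]_0$ are locally presentable by Corollary \ref{locpres} (using that $\ObD$ is again a small $\Nu$-category). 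Since $\ca$ is $\Nu$-complete and $\Nu$-cocomplete, the enriched Kan extensions along $i\colon \ObD\to D$ exist, giving the required adjunctions. Then, unwinding definitions, the left-induced structure has cofibrations $(i^*)^{-1}(\text{pointwise cofibrations})$ and weak equivalences $(i^*)^{-1}(\text{pointwise weak equivalences})$, i.e.\ the cofibrations and weak equivalences detected pointwise — this is exactly the injective structure; dually the right-induced structure is the projective one. So the whole content is verifying the Acyclicity conditions of Theorem \ref{ind}.

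For part (i), the Acyclicity condition reads $(i^*)^{-1}(\cc)^{\lift} \subseteq (i^*)^{-1}\cw$, i.e.\ every map in $[D,\ca]_0$ having the right lifting property against all pointwise cofibrations must be a pointwise weak equivalence. The strategy, mirroring the non-enriched argument, is to show that such a map is in fact a pointwise trivial fibration. The key is to identify a set (or class) of generating pointwise cofibrations in $[D,\ca]_0$ whose codomains involve representable-type $\Nu$-functors. Concretely, for $d\in D$ and a cofibration $f\colon A\to B$ in $\ca_0$, the $\Nu$-functor $D(d,-)\otimes A \to D(d,-)\otimes B$ (the left Kan extension along $i$ of the generator at $d$, or equivalently $i_!$ applied to $f$ concentrated at $d$) should be a pointwise cofibration precisely because its value at $d'$ is $f\otimes D(d,d')\colon A\otimes D(d,d')\to B\otimes D(d,d')$, which is a cofibration by the hypothesis that $-\otimes D(d,d')$ preserves cofibrations. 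By the enriched Yoneda lemma, lifting against $i_!f$ in $[D,\ca]_0$ corresponds to lifting against $f$ in $\ca_0$ at the object $d$. Hence a map with the right lifting property against all such $i_!f$ is a pointwise trivial fibration, so in particular a pointwise weak equivalence. Part (ii) is dual: the Acyclicity condition $^{\lift}\big((i^*)^{-1}(\cf\cap\cw)\big)\subseteq (i^*)^{-1}\cw$ is proved by showing a map with the left lifting property against all pointwise trivial fibrations is a pointwise trivial cofibration, using the enriched-Yoneda identification of left lifting against $i_!g$ (for $g$ a trivial cofibration in $\ca_0$) and the hypothesis that $-\otimes D(d,d')$ preserves trivial cofibrations — this is where part (ii) needs only the trivial-cofibration hypothesis, since for the projective case the relevant generating maps are of the form $i_!g$ with $g$ a generating trivial cofibration; there one needs to know these $i_!g$ are pointwise trivial cofibrations, which follows from preservation of trivial cofibrations by $-\otimes D(d,d')$.

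The main obstacle is the enriched-Yoneda bookkeeping: one must be careful that "pointwise (trivial) cofibration" is genuinely detected by a lifting property against an honest \emph{set} (or small class) of morphisms — here one uses that $\ca_0$ is accessible, hence the weak factorization systems $(\cc,\cf\cap\cw)$ and $(\cc\cap\cw,\cf)$ are accessible, so detection by a small class is legitimate — and that the adjunction isomorphism $[D,\ca]_0(i_!X,Y)\cong [\ObD,\ca]_0(X,i^*Y)$ together with $i_!$ applied to the generators at each $d\in D$ really produces maps of the form $D(d,-)\otimes f$ whose components are $f\otimes D(d,d')$. Once this identification is in place, the hypotheses on $-\otimes D(d,d')$ translate directly into the statements "$i_!$ of a (trivial) cofibration at $d$ is a pointwise (trivial) cofibration", and the Acyclicity conditions follow formally. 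I would also remark that the asymmetry in the hypotheses of (i) and (ii) — cofibrations for injective, trivial cofibrations for projective — comes from the fact that for the injective case one must recognize $i_!$ of \emph{generating cofibrations} as pointwise cofibrations (to have enough maps to test against, via the reverse lifting), while for the projective case the fibrations are $(i^*)^{-1}\cf$ outright and it is the \emph{trivial} cofibrations that must be shown to lift, whence only the weaker preservation property is needed.
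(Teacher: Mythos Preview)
Your argument for part~(i) is essentially the paper's proof: left-induce along the adjunction $i^*\dashv i_*$, check the hypotheses of Theorem~\ref{ind} via Corollary~\ref{locpres}, and verify Acyclicity by showing that any $\eta$ with the right lifting property against all pointwise cofibrations is in fact a pointwise trivial fibration, using the enriched Yoneda identification $\ca_0(A,Fd)\cong [D,\ca]_0(A\otimes D(d,-),F)$. One small remark: your concern about needing a \emph{small} generating class is a red herring---the argument simply ranges over all cofibrations $f$ in $\ca_0$ and produces a pointwise cofibration $f\otimes D(d,-)$ for each; no smallness is invoked.

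Your sketch of part~(ii), however, is not the correct dual and contains a genuine gap. First, the Acyclicity condition for the right-induced structure reads ${}^{\lift}\big((i^*)^{-1}\cf\big)\subseteq (i^*)^{-1}\cw$, with $\cf$ rather than $\cf\cap\cw$. More importantly, the dual argument does not use $i_!$ and the covariant representable $D(d,-)$: to test whether $\eta_d\colon Fd\to Gd$ has the left lifting property against a fibration $p\colon X\to Y$ in $\ca_0$, one needs the identification $\ca_0(Gd,X)\cong [D,\ca]_0\big(G,\,X^{D(-,d)}\big)$ coming from the \emph{cotensor} and the \emph{contravariant} representable $D(-,d)$. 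The hypothesis that $-\otimes D(d',d)$ preserves trivial cofibrations is then used, via the adjunction $-\otimes D(d',d)\dashv (-)^{D(d',d)}$, to conclude that $(-)^{D(d',d)}$ preserves fibrations, so that $p^{D(-,d)}$ is a pointwise fibration against which $\eta$ lifts. Your appeal to ``$i_!g$ for $g$ a trivial cofibration'' is the cofibrantly-generated heuristic for projective structures, but it does not establish the Acyclicity inclusion: knowing that $i_!g$ is a pointwise trivial cofibration tells you nothing about an arbitrary $\eta\in{}^{\lift}\big((i^*)^{-1}\cf\big)$.
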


\begin{proof}
We prove (i). The proof for (ii) is dual. Let $\ObD$ and $i\colon \ObD\to D$ be as in Definition \ref{enrichedob}. Let $(\cc,\cf,\cw)$ denote the accessible model structure on $\ca_0$. The weak factorization system $(\cc,\cf\cap \cw)$ induces a pointwise weak factorization system $ (\cc^{\ObD}, (\cf\cap\cw)^{\ObD})$ on $[\ob D,\ca]_0$. By Corollary \ref{locpres}, the category $[D, \ca]_0$ is locally presentable. Moreover, note that the category~$[\ob D, \ca]_0$ is also an accessible model category with the pointwise model structure coming from the accessible model structure of $\ca_0$. Since $\ca$ is $\Nu$-complete, we have an adjunction
\begin{tz}
\node (D) at (-1,0) {$[D, \ca]_0$};
\node (M) at (2,0) {$[\ob D, \ca]_0$,};
\draw[->] (D) to [bend left=30] node[above] {$i^*$} (M);
\draw[->] (M) to [bend left=30] node[below] {$i_*$} (D);
\node at (0.5,0) {$\bot$};
\end{tz}
where $i^*$ is the precomposition functor, and $i_*$ is the underlying functor of the enriched right Kan extension functor. So we can apply Lemma \ref{existence}, and obtain a left-lifting $(\cc^{\inj}, (\cf\cap\cw)^{\inj})$ on $[D,\ca]_0$ from the weak factorization system $(\cc^{\ObD}, (\cf\cap\cw)^{\ObD})$ on $[\ObD, \ca]_0$. By Theorem~\ref{ind}, it remains to show the Acyclicity condition 
\[((i^*)^{-1}\cc)^{\lift}\subseteq (i^*)^{-1}\cw.\]
Let $\eta\colon F\Rightarrow G\in ((i^*)^{-1}\cc)^{\lift}$. We show that $\eta$ is in particular a pointwise trivial fibration. To see this, for each $d\in D$, we show that $\eta_d\colon Fd\to Gd$ has the right lifting property with respect to every cofibration in $\ca_0$. Let $i\colon A\to B$ be a cofibration in~$\ca_0$, and fix~$d\in D$. By the enriched Yoneda lemma and since $\ca$ is tensored over $\Nu$,
\[ \begin{array}{lcl}
\ca(A, Fd)&\cong & [D,\Nu](D(d,-), \ca(A,F-))\\
&\cong & [D,\ca](A\otimes D(d,-), F).
\end{array} \]
Hence $\eta_d$ has the right lifting property with respect to $i$ in $\ca_0$ if and only if $\eta$ has the right lifting property with respect to $i\otimes D(d,-)$ in $[D,\ca]_0$. By assumption, each component of $i\otimes D(d,-)$ is a cofibration, since the functors $-\otimes D(d,d')\colon \ca_0\to \ca_0$ preserve cofibrations for all $d'\in D$. Since $\eta\in ((i^*)^{-1}\cc)^{\lift}$, it has the right lifting property with respect to~$i\otimes D(d,-)$, and thus $\eta$ is a pointwise trivial fibration. In particular,~$\eta$~is a pointwise weak equivalence, i.e.~$\eta\in (i^*)^{-1}\cw$.
\end{proof}

\begin{rem}
Suppose $\Nu$ admits a model structure. Since $\Nu$ is locally presentable in the enriched sense and it is $\Nu$-complete (see \cite[Section 3.2]{KEL}), if the functors ${-\otimes D(d,d')\colon \Nu\to \Nu}$ preserve cofibrations (resp.~trivial cofibrations) for all $d,d'\in D$, then the category $[D, \Nu]_0$ admits an injective (resp.~projective) model structure. 
\end{rem}

\begin{rem}
In Theorem \ref{injproj}, we need to assume that the $\Nu$-category $\ca$ is $\text{$\Nu$-complete}$ for the existence of the enriched right Kan extension $i_*\colon [\ob D, \ca]\to [D, \ca]$, while the $\text{$\Nu$-cocompleteness}$ (and hence the existence of the enriched left Kan extension) comes from the local $\Nu$-presentability of $\ca$.
\end{rem}

\begin{rem}
Since the functors $-\otimes X\colon \ca_0\to \ca_0$ and $(-)^X\colon \ca_0\to \ca_0$ form an adjunction for all $X\in \Nu$, the conditions in Theorem \ref{injproj} on the functors $-\otimes D(d,d')\colon \ca_0\to \ca_0$, for $d,d'\in D$, can equivalently be formulated as
\begin{rome}
\item the functors $(-)^{D(d,d')}\colon \ca_0\to\ca_0$ preserve trivial fibrations for all $d,d'\in D$, in the injective case, and
\item the functors $(-)^{D(d,d')}\colon \ca_0\to\ca_0$ preserve fibrations for all $d,d'\in D$, in the projective case.
\end{rome}
\end{rem}

\section{Enrichment of the model structure} \label{sec5}

Let $(\Nu,\otimes,I)$ be a locally presentable base and a model category. Given a $\Nu$-category~$\ca$, a model structure on its underlying category $\ca_0$ may be enriched over the model structure on $\Nu$. This gives rise to the notion of a $\Nu$-enriched model category, which is a generalization of the notion of a simplicial model category (see \cite[Definition 9.1.6]{Hir}). In this section, we prove that, if $\ca$ is a $\Nu$-enriched model category and $D$ is a small $\Nu$-category, then the injective and projective model structures on $[D,\ca]_0$ are again $\text{$\Nu$-enriched}$, when they exist.

\begin{defn} \label{emc}
A $\Nu$-category $\ca$ is a \textbf{$\Nu$-enriched model category} if 
\begin{description}
\item[{[MC1-5]}] its underlying category $\ca_0$ admits a model structure,
\item[{[MC6]}] it is tensored and cotensored over $\Nu$, and 
\item[{[MC7]}] if $i\colon A\to B$ is a cofibration in $\ca_0$ and $p\colon X\to Y$ is a fibration in $\ca_0$, the pullback corner map \[ (i^*, p_*)\colon \ca(B,X)\to \ca(A,X)\times_{\ca(A,Y)} \ca(B,Y) \] is a fibration in $\Nu$, which is trivial if either $i$ or $p$ is a weak equivalence. 
\end{description}
\end{defn}

We first check that the $\Nu$-category $[D,\ca]$ is tensored and cotensored over $\Nu$ whenever~$\ca$~is, for every small $\Nu$-category $D$. This shows that, when $\ca$ is a $\Nu$-enriched model category, then the $\Nu$-category $[D,\ca]$ satisfies in particular Axiom [MC6]. 

\begin{lemme} \label{cotens}
Let $\ca$ be a tensored and cotensored $\Nu$-category, and let $D$ be a small $\Nu$-category. The $\Nu$-category $[D,\ca]$ is tensored and cotensored over $\Nu$, with tensor and cotensor products defined pointwise. 
\end{lemme}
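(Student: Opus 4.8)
The plan is to construct the tensor and cotensor on $[D,\ca]$ pointwise and then verify the defining adjunction isomorphism using the fact that $\Nu$-colimits and $\Nu$-limits in $[D,\ca]$ are computed pointwise. Recall that a $\Nu$-category $\cb$ is tensored over $\Nu$ if for every $X\in \Nu$ and $B\in \cb$ there is an object $X\otimes B$ together with a $\Nu$-natural isomorphism $\cb(X\otimes B, C)\cong \Nu(X, \cb(B,C))$, and cotensored if there is $B^X$ with $\cb(C, B^X)\cong \Nu(X, \cb(C,B))$. So first I would, given $X\in \Nu$ and a $\Nu$-functor $F\colon D\to \ca$, define $X\otimes F$ and $F^X$ objectwise by $(X\otimes F)d = X\otimes Fd$ and $(F^X)d = (Fd)^X$, using the tensor and cotensor of $\ca$; the action on hom-objects is obtained from the $\Nu$-functoriality of $-\otimes -$ and $(-)^{(-)}$ on $\ca$, i.e.\ from the maps $D(d,d')\to \ca(Fd,Fd')\to \ca(X\otimes Fd, X\otimes Fd')$ and similarly for the cotensor. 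One should briefly check that these assignments indeed define $\Nu$-functors $D\to\ca$ (compatibility with composition and units), which is routine.

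Next I would establish the required $\Nu$-natural isomorphisms
\[ [D,\ca](X\otimes F, G)\cong \Nu(X, [D,\ca](F,G)) \quad\text{and}\quad [D,\ca](G, F^X)\cong \Nu(X,[D,\ca](G,F)). \]
For this I would use that the hom-object $[D,\ca](F,G)$ is given by the enriched end $\int_{d\in D}\ca(Fd,Gd)$ (see \cite{KEL}), that $\Nu(X,-)\colon \Nu\to\Nu$ preserves limits and in particular ends, and that the pointwise tensor-hom adjunction in $\ca$ supplies isomorphisms $\ca(X\otimes Fd, Gd)\cong \Nu(X,\ca(Fd,Gd))$ which are natural (in the enriched sense) in $d$. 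Interchanging the end with $\Nu(X,-)$ then yields
\[ [D,\ca](X\otimes F,G) = \int_d \ca(X\otimes Fd, Gd)\cong \int_d \Nu(X,\ca(Fd,Gd))\cong \Nu\Bigl(X, \int_d \ca(Fd,Gd)\Bigr) = \Nu(X,[D,\ca](F,G)), \]
and dually for the cotensor. Alternatively — and perhaps more cleanly — one can avoid ends entirely: since $\ca$ is tensored and cotensored, $-\otimes B\dashv (-)^{(-)}$ on $\ca$; these assemble objectwise into an adjoint pair between $[D,\ca]_0$ and itself, and the enriched adjunction then follows from the general fact that a pointwise-defined $\Nu$-adjunction on diagram categories is again a $\Nu$-adjunction, using that the relevant universal properties (as weighted (co)limits) are detected pointwise.

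I do not expect a serious obstacle here; the statement is essentially a bookkeeping exercise, and the main point requiring care is simply being precise about what "defined pointwise" means at the level of hom-objects and verifying the $\Nu$-naturality of the comparison isomorphism in the $D$-variable, i.e.\ that the objectwise tensor-hom adjunctions of $\ca$ are compatible with the $\Nu$-functorial structure so that they glue to a $\Nu$-adjunction on $[D,\ca]$. The cleanest exposition chooses one of the two routes above — the end computation or the "pointwise adjunction" principle — and cites \cite{KEL} for the relevant formalism of weighted limits in enriched functor categories; the cotensor case is then obtained either by an entirely dual argument or, since tensoredness and cotensoredness of $[D,\ca]$ over $\Nu$ are equivalent to $[D,\ca]$ admitting certain conical/weighted (co)limits that are computed pointwise, as a formal consequence.
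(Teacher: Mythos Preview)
Your proposal is correct and follows essentially the same approach as the paper: both define the tensor $F\otimes K$ and cotensor $F^K$ as the composites $D\xrightarrow{F}\ca\xrightarrow{-\otimes K}\ca$ and $D\xrightarrow{F}\ca\xrightarrow{(-)^K}\ca$, i.e.\ pointwise. The only difference is emphasis: the paper spells out explicitly how $-\otimes K$ and $(-)^K$ act on hom-objects of $\ca$ (via adjuncts of evaluation maps) and then leaves the adjunction properties to the reader, whereas you take the $\Nu$-functoriality of $-\otimes K$ and $(-)^K$ for granted and instead supply the end computation $\int_d\ca(X\otimes Fd,Gd)\cong \Nu(X,\int_d\ca(Fd,Gd))$ that the paper omits; your argument is thus slightly more complete on the universal-property side.
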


\begin{proof}
Let $F\colon D\to \ca$ be a $\Nu$-functor, and let $K\in \Nu$. The aim is to define a tensor product $F(-)\otimes K\colon D\to \ca$ and a cotensor product $F(-)^K\colon D\to \ca$. Consider the functor $-\otimes K\colon \ca\to \ca$ sending an object $A\in \ca$ to the tensor product $A\otimes K$ and such that, for $A,B\in \ca$, the morphism
\[ (-\otimes K)_{A,B}\colon \ca(A,B)\longrightarrow \ca(A\otimes K, B\otimes K) \]
is the adjunct of the evalutation morphism $\operatorname{ev}\otimes \id_K\colon \ca(A,B)\otimes A\otimes K\to B\otimes K$. Define the tensor product $F(-)\otimes K\colon D\to \ca$ to be the composite 
\[ D\stackrel{F}{\longrightarrow} \ca \xrightarrow{-\otimes K} \ca.\]
Then, consider the functor $(-)^K\colon \ca \to \ca$ sending an object $A\in \ca$ to the cotensor product $A^K$, and such that, for $A,B\in \ca$, the morphism
\[ ((-)^K)_{A,B}\colon \ca(A,B)\longrightarrow \ca(A^K, B^K) \]
is the adjunct to the morphism
\[ \ca(A,B)\otimes A^K\otimes K \xrightarrow{\id\otimes \eta} \ca(A,B)\otimes A \stackrel{\operatorname{ev}}{\longrightarrow} B, \]
where $\eta\colon A^K\otimes K\to A$ is the adjunct to the identity morphism of $A^K$. 
Define the cotensor product $F(-)^K\colon D\to \ca$ to be the composite
\[ D\stackrel{F}{\longrightarrow} \ca \stackrel{(-)^K}{\longrightarrow} \ca.\]
These constructions satisfy the expected adjunction properties. The proof is left to the reader.  
\end{proof}

To prove that the injective and projective model structures on $[D,\ca]$, if they exist, are $\Nu$-enriched whenever the one on $\ca$ is, it remains to show that Axiom [MC7] is satisfied. Since cofibrations and weak equivalences are defined pointwise in the injective model structure on $[D,\ca]$, and fibrations and weak equivalences are defined pointwise in the projective one, it is convenient to have equivalent statements to axiom [MC7], where only (trivial) cofibrations or only (trivial) fibrations of the enriched model structure appear. This allows us to check pointwise that the category $[D,\ca]$ satisfies this last axiom. Next lemma is an adaptation of \cite[Proposition 9.3.7]{Hir} to the general setting. 

\begin{lemme} \label{Cond}
Let $\ca$ be a tensored and cotensored $\Nu$-category, whose underlying category~$\ca_0$ admits a model structure. Axiom [MC7] of Definition \ref{emc} is equivalent to each of the following statements. 
\begin{rome}
\item If $i\colon A\to B$ is a cofibration in $\ca_0$ and $j\colon K\to L$ is a cofibration in $\Nu$, the pushout corner map
\[ i\star j\colon A\otimes L\amalg_{A\otimes K} B\otimes K\to B\otimes L \]
is a cofibration in $\ca_0$, which is trivial if either $i$ or $j$ is a weak equivalence. 
\item If $p\colon X\to Y$ is a fibration in $\ca_0$ and $j\colon K\to L$ is a cofibration in $\Nu$, the pullback corner map
\[ (j^*, p_*)\colon X^L\to X^K\times_{Y^L} Y^K \]
is a fibration in $\ca_0$, which is trivial if either $j$ or $p$ is a weak equivalence. 
\end{rome}
\end{lemme}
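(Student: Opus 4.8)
The plan is to prove the chain of equivalences [MC7] $\Leftrightarrow$ (i) $\Leftrightarrow$ (ii) by exploiting the adjunctions between tensor, cotensor, and hom. The essential observation is that all three statements are different ways of encoding the same lifting-property data: a lift of $i\colon A\to B$ against $p\colon X\to Y$ over $j\colon K\to L$, mediated by the triple adjunction among $-\otimes K$, $(-)^K$, and $\ca(-,-)$. Concretely, I would establish the general adjunction-theoretic identity: for $i\colon A\to B$ in $\ca_0$, $j\colon K\to L$ in $\Nu$, and $p\colon X\to Y$ in $\ca_0$, the pushout corner map $i\star j$ has the left lifting property against $p$ if and only if $j$ has the left lifting property against the pullback corner map $(i^*,p_*)$ of [MC7], if and only if $i$ has the left lifting property against the pullback corner map $(j^*,p_*)$ of (ii). This is the enriched analogue of the classical "two-variable adjunction" compatibility, and once it is in place the equivalences follow by the standard characterization of (trivial) cofibrations and (trivial) fibrations via lifting properties in the model categories $\ca_0$ and $\Nu$.

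First I would set up the two-variable adjunction underlying the tensor--cotensor--hom structure: since $\ca$ is tensored and cotensored over $\Nu$, there are natural isomorphisms
\[
\ca_0(A\otimes K, X)\;\cong\;\Nu_0(K,\ca(A,X))\;\cong\;\ca_0(A, X^K),
\]
and more importantly these are isomorphisms of hom-\emph{objects} compatible with the $\Nu$-enrichment. Passing to arrow categories, $i\star(-)$, $(-)^{(-)}$ applied to $p$, and $\ca(i,-)$ applied to $p$ assemble into mutually adjoint constructions; the key diagram to write down is the cube/square whose commutativity expresses that a solid square
\[
\begin{tikzcd}
A\otimes L\amalg_{A\otimes K}B\otimes K \arrow[r]\arrow[d, "i\star j"'] & X\arrow[d, "p"]\\
B\otimes L \arrow[r] & Y
\end{tikzcd}
\]
admits a diagonal filler precisely when the transposed square with $j$ on the left and $(i^*,p_*)\colon\ca(B,X)\to\ca(A,X)\times_{\ca(A,Y)}\ca(B,Y)$ on the right does, and symmetrically when the square with $i$ on the left and $(j^*,p_*)\colon X^L\to X^K\times_{Y^L}Y^K$ on the right does. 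This is pure adjunction bookkeeping, so I would state it as a preliminary and leave the diagram chase to the reader, in the spirit of \cite[Proposition 9.3.7]{Hir}.

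With that transposition in hand, the three equivalences become routine. For [MC7] $\Rightarrow$ (i): a map $f\colon C\to D$ in $\ca_0$ is a (trivial) fibration iff it has the right lifting property against all (trivial) cofibrations of $\ca_0$; so to show $i\star j$ is a cofibration (trivial when $i$ or $j$ is) it suffices to check it has the left lifting property against every fibration $p$ (against every trivial fibration when $i$ or $j$ is a weak equivalence), which by transposition is exactly the statement that $j$ lifts against $(i^*,p_*)$ — and this holds by [MC7], noting that $(i^*,p_*)$ is a fibration in $\Nu$, trivial when $i$ (or $p$) is a weak equivalence, and using that $j$ is a (trivial) cofibration in $\Nu$. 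The converse (i) $\Rightarrow$ [MC7] and the two implications linking [MC7] with (ii) are entirely parallel, swapping the roles of $\ca_0$ and $\Nu$ and of tensor and cotensor; in each case one uses only the lifting-property characterization of the relevant classes together with the transposition isomorphism. I would present [MC7] $\Leftrightarrow$ (i) in full and then remark that (ii) is obtained by the same argument with the other adjunction, or that (i) $\Leftrightarrow$ (ii) follows since both are equivalent to [MC7].

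The main obstacle I anticipate is not conceptual but bookkeeping: one must be careful that the transposition isomorphisms are genuinely natural in all three variables and interact correctly with the pushout defining $i\star j$ and the pullbacks defining the two corner maps, so that "has a filler" really does transpose to "has a filler" rather than merely "both sides are nonempty." Getting the pushout--pullback duality to line up — i.e.\ that the pushout $A\otimes L\amalg_{A\otimes K}B\otimes K$ is sent under $\ca_0(-,X)$ to the appropriate pullback, compatibly with $p$ — is the one place where a genuine (if short) computation is needed; everything downstream is then formal.
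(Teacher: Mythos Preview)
Your proposal is correct and is precisely the approach the paper takes: the paper's entire proof is the one-line observation that everything follows from the tensor--cotensor--hom adjunctions $\ca_0(A\otimes K,X)\cong \Nu(K,\ca(A,X))\cong \ca_0(A,X^K)$, which is exactly the two-variable adjunction transposition of lifting problems you describe. You have simply unpacked in detail what the paper leaves implicit (and you even cite the same source, \cite[Proposition 9.3.7]{Hir}).
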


\begin{proof}
This follows immediately from the adjunctions
\[ \ca_0(K\otimes A, X)\cong \Nu(K, \ca(A,X)) \cong \ca_0(A, X^K), \]
where $A,X\in \ca$ and $K\in \Nu$. 
\end{proof}

If a $\Nu$-category $\ca$ admits all small conical limits and is cotensored over $\Nu$, then~$\ca$ is $\Nu$-complete (see \cite[Theorem 3.73]{KEL}). Hence a $\Nu$-enriched model category is $\text{$\Nu$-complete}$, and the $\Nu$-completeness assumption of Theorem \ref{injproj} can be removed. We call $\ca$ an \emph{accessible $\Nu$-enriched model category} if $\ca$ is a $\Nu$-enriched model category and the model structure on $\ca_0$ is accessible. 

\begin{theorem} \label{enriched}
Let $(\Nu, \otimes, I)$ be a locally presentable base, and a model category. Suppose~$\ca$ is a locally $\Nu$-presentable $\Nu$-category, which admits an accessible $\Nu$-enriched model structure, and let $D$ be a small $\Nu$-category. 
\begin{rome}
\item If the functors $-\otimes D(d,d')\colon \ca_0\to \ca_0$ preserve cofibrations for all $d,d'\in D$, the injective model structure on $[D,\ca]_0$ exists, and is again $\Nu$-enriched. 
\item If the functors $-\otimes D(d,d')\colon \ca_0\to \ca_0$ preserve trivial cofibrations for all ${d,d'\in D}$, the projective model structure on $[D,\ca]_0$ exists, and is again $\Nu$-enriched. 
\end{rome}
\end{theorem}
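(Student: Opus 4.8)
The plan is to combine Theorem \ref{injproj}, which already gives the existence of the injective and projective model structures on $[D,\ca]_0$ under the stated hypotheses (the $\Nu$-completeness assumption being automatic for a $\Nu$-enriched model category, as noted just before the statement), with Lemma \ref{cotens}, which shows $[D,\ca]$ is tensored and cotensored over $\Nu$ with the operations computed pointwise. Thus Axioms [MC1-5] and [MC6] of Definition \ref{emc} hold for $[D,\ca]$ in both cases, and the only thing left is Axiom [MC7]. By Lemma \ref{Cond}, we may verify [MC7] in the equivalent form (i) or (ii), whichever is more convenient given which classes of maps are defined pointwise.

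For part (i), I would use the reformulation Lemma \ref{Cond}(i): given a cofibration $i\colon F\Rightarrow G$ in the injective model structure on $[D,\ca]_0$ and a cofibration $j\colon K\to L$ in $\Nu$, I must show the pushout corner map $i\star j\colon F\otimes L\amalg_{F\otimes K} G\otimes K\to G\otimes L$ is an injective cofibration, trivial if $i$ or $j$ is. Since injective cofibrations and injective weak equivalences are exactly the pointwise ones, and since tensors in $[D,\ca]$ are computed pointwise by Lemma \ref{cotens}, evaluating $i\star j$ at an object $d\in D$ yields precisely the pushout corner map $(i_d)\star j$ in $\ca_0$ (pushouts being pointwise in $[D,\ca]_0$). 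As $i_d\colon Fd\to Gd$ is a cofibration in $\ca_0$ for every $d$, and $\ca$ is a $\Nu$-enriched model category, Lemma \ref{Cond}(i) applied to $\ca$ gives that $(i_d)\star j$ is a cofibration in $\ca_0$, trivial when $i_d$ or $j$ is a weak equivalence. Hence $i\star j$ is a pointwise cofibration, i.e.\ an injective cofibration, and it is an injective trivial cofibration when $j$ is a weak equivalence or when $i$ is a pointwise, hence injective, weak equivalence. This establishes [MC7] for $[D,\ca]_0$ with the injective model structure.

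For part (ii), the argument is dual, using the reformulation Lemma \ref{Cond}(ii): given a fibration $p\colon X\Rightarrow Y$ in the projective model structure on $[D,\ca]_0$ — which is by definition a pointwise fibration — and a cofibration $j\colon K\to L$ in $\Nu$, the pullback corner map $(j^*, p_*)\colon X^L\to X^K\times_{Y^L} Y^K$ is computed pointwise by Lemma \ref{cotens} (cotensors and limits in $[D,\ca]_0$ being pointwise), so at each $d\in D$ it is the pullback corner map $(j^*, (p_d)_*)$ in $\ca_0$. Since $p_d\colon Xd\to Yd$ is a fibration in $\ca_0$ and $\ca$ satisfies [MC7], Lemma \ref{Cond}(ii) for $\ca$ gives that this is a fibration in $\ca_0$, trivial when $j$ or $p_d$ is a weak equivalence. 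Therefore $(j^*,p_*)$ is a pointwise fibration, i.e.\ a projective fibration, trivial when $j$ is a weak equivalence or $p$ is a projective weak equivalence. This yields [MC7] for the projective model structure, completing the proof.

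There is essentially no serious obstacle here: the content is entirely in the already-proven Theorem \ref{injproj} (for existence) and in the pointwise nature of tensors, cotensors, pushouts, and pullbacks in $[D,\ca]$, which is Lemma \ref{cotens} together with \cite[Section 3.3]{KEL}. The only point requiring a little care is making sure one picks the right half of Lemma \ref{Cond} — form (i) for the injective structure, where cofibrations and weak equivalences are pointwise, and form (ii) for the projective structure, where fibrations and weak equivalences are pointwise — so that the pointwise verification goes through without needing any description of the injective fibrations or the projective cofibrations.
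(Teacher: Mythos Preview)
Your proposal is correct and follows essentially the same approach as the paper: existence comes from Theorem \ref{injproj}, Axiom [MC6] from Lemma \ref{cotens}, and Axiom [MC7] is verified via Lemma \ref{Cond}(i) for the injective case and Lemma \ref{Cond}(ii) for the projective case, using that tensors, cotensors, and (co)limits in $[D,\ca]$ are computed pointwise so that the pushout/pullback corner maps evaluate componentwise. The paper's own proof is slightly terser but makes exactly the same choices and observations.
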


\begin{proof}
We show that the injective model structure satisfies condition (i) of Lemma~\ref{Cond}.  Let $\alpha\colon F\Rightarrow G$ be a cofibration in $[D,\ca]_0$, and $j\colon K\to L$ be a cofibration in~$\Nu$. Since the model structure is injective, the morphism $\alpha_d\colon Fd\to Gd$ is a cofibration in $\ca_0$, for every $d\in D$. The morphism $\alpha\star j\colon F\otimes L\amalg_{F\otimes K} G\otimes K\Rightarrow G\otimes L$ has components $(\alpha\star j)_d=(\alpha_d)\star j$, for $d\in D$, since tensor products and colimits are computed pointwise in $[D,\ca]_0$. Every component $(\alpha_d)\star j$, for $d\in D$, is a cofibration in $\ca_0$, since $\ca$ is a $\Nu$-enriched model category. Hence $\alpha\star j$ is a cofibration in $[D,\ca]_0$ with the injective model structure. Moreover, if either $\alpha$ or $j$ is a weak equivalence, the morphism $(\alpha_d)\star j$ is trivial, for every $d\in D$, and thus $\alpha\star j$ is also trivial. 

Similarly, one can show that the projective model structure satisfies condition (ii) of Lemma~\ref{Cond}. 
\end{proof}

\begin{rem} \label{bla}
In particular, if all hom-objects $D(d,d')$, for $d,d'\in D$, are cofibrant in $\Nu$, the functors $-\otimes D(d,d')\colon \ca_0\to \ca_0$ preserve all cofibrations and trivial cofibrations. To see this, apply Lemma \ref{Cond} (i) to every (trivial) cofibration $i\colon A\to B$ in $\ca_0$ and to the cofibration $\varnothing\to D(d,d')$ in $\Nu$, for $d,d'\in D$. In particular, if all objects in $\Nu$ are cofibrant, this condition is always satisfied. 
\end{rem}

This remark gives rise to our first example. Recall that every combinatorial model category is in particular accessible (see \cite[Corollary 3.1.7]{HKRS}). 

\begin{ex}[Simplicial enrichment] \label{sSet}
Take $(\Nu,\otimes,I)$ to be the closed symmetric mo\-no\-i\-dal category $(\sset,\times,\Delta^0)$ or $(\sset_*,\wedge, S^0)$ of (pointed) simplicial sets. These are locally presentable bases. The Quillen model structure, introduced in \cite{Qui}, endows~$\sset$ and~$\sset_*$ with a simplicial, combinatorial model structure, in which every object is cofibrant. Suppose $\ca$ is an enriched locally presentable and accessible simplicial model category. By Theorem \ref{enriched} and Remark \ref{bla}, for every small simplicial category~$D$, the injective and projective model structures on the category $[D,\ca]$ of simplicial diagrams exist and are simplicial. In particular, this applies to the folklore case where~${\ca=\sset,\sset_*}$ with the Quillen model structure. This also applies to ${\ca=\mathrm{Sp}^{\Sigma}}$ the category of symmetric spectra of simplicial sets enriched over simplicial sets, with one of the simplicial model structures, e.g.~the stable model structure or the stable/level projective and injective model structures (see \cite[III.3-4]{Sch}).
\end{ex}

\section{The Cylinder and Path Object arguments} \label{sec6}

Let $(\Nu,\otimes,I)$ be a locally presentable base, and a model category. If the unit $I$ is cofibrant in $\Nu$, one can find other conditions on a $\Nu$-enriched model category $\ca$ under which the injective model structure on $[D,\ca]_0$ exists, for every small $\Nu$-category $D$, e.g.~if all objects of $\ca$ are cofibrant, in contrast to Remark \ref{bla} where we assume that the hom-objects of $D$ are cofibrant in $\Nu$. There is a dual statement for the projective case. This gives more examples of categories of enriched diagrams which admit an injective or projective model structure. The proof follows from the Cylinder and Path Object arguments, which we first state here.  These statements are inspired by the original Quillen Path Object Argument of \cite{Qui}.

\begin{theorem}[\!{\cite[Theorem 2.2.1]{HKRS}}] \label{cyl}
Let $(\cm, \cc,\cf,\cw)$ be an accessible model category, and let $\ck$ be a locally presentable category. Suppose we have the following adjunction 
\begin{tz}
\node (D) at (-1,0) {$\ck$};
\node (M) at (2,0) {$\cm$.};
\draw[->] (D) to [bend left=30] node[above] {$V$} (M);
\draw[->] (M) to [bend left=30] node[below] {$R$} (D);
\node at (0.5,0) {$\bot$};
\end{tz}
If the following hold:
\begin{rome}
\item for every $X\in \ck$, there is a morphism $\phi_X\colon QX\to X$ in $\ck$ such that $V(\phi_X)$ is a weak equivalence and $V(QX)$ is cofibrant in $\cm$,
\item for every morphism $\alpha\colon X\to Y$ in $\ck$, there exists a morphism $Q\alpha\colon QX\to QY$ in $\ck$ such that $\alpha\circ \phi_X=\phi_Y\circ Q\alpha$, and 
\item for every $X\in \ck$, there is a factorization
\[ QX\sqcup QX\stackrel{i}{\longrightarrow} \Cyl(QX)\stackrel{w}{\longrightarrow} QX \]
of the fold map in $\ck$ such that $V(i)$ is a cofibration and $V(w)$ is a weak equivalence in $\cm$,
\end{rome}
then the Acyclicity condition $(V^{-1}\cc)^{\lift}\subseteq V^{-1}\cw$ holds. In particular, the left-induced model structure on $\ck$ exists.
\end{theorem}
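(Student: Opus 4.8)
The plan is to verify the Acyclicity condition $(V^{-1}\cc)^{\lift}\subseteq V^{-1}\cw$ directly and then quote Theorem~\ref{ind}(i), whose remaining hypotheses — accessibility of $\cm$ and local presentability of $\ck$ — are exactly the standing assumptions here. So I would fix a morphism $f\colon X\to Y$ of $\ck$ lying in $(V^{-1}\cc)^{\lift}$ and aim to show $Vf\in\cw$; no lifted weak factorization system is actually needed for this, only that $f$ has the right lifting property against every $g$ with $Vg\in\cc$. Throughout I would use that $V$, being a left adjoint, preserves the initial object and all coproducts, and that the weak equivalences of a model category satisfy not merely $2$-out-of-$3$ but also $2$-out-of-$6$. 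Hypotheses (i)--(iii) are to be read as a homotopical ``replacement package'' playing, in the dual (cylinder) setting, the role that fibrant replacements and path objects play in Quillen's path object argument.

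First I would record two lifting consequences. Since $V(QW)$ is cofibrant for every $W\in\ck$ by~(i), the map $\varnothing_\ck\to QW$ lies in $V^{-1}\cc$; applying this with $W=Y$ and solving the lifting problem with left leg $\varnothing_\ck\to QY$, right leg $f$ and bottom arrow $\phi_Y\colon QY\to Y$ produces a morphism $s\colon QY\to X$ with $fs=\phi_Y$. Next I would apply $V$ to the factorization $QX\sqcup QX\xrightarrow{i}\Cyl(QX)\xrightarrow{w}QX$ of the fold map from~(iii): as $V(QX\sqcup QX)=VQX\sqcup VQX$ is a coproduct of copies of the cofibrant object $VQX$, hence cofibrant, the cofibration $Vi$ makes $V(\Cyl(QX))$ cofibrant, and together with the weak equivalence $Vw$ it exhibits $V(\Cyl(QX))$ as a cylinder object for $VQX$ in $\cm$; in particular the two inclusions $V\iota_0,V\iota_1\colon VQX\to V(\Cyl(QX))$ come out as trivial cofibrations.

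The key step is to compare $sQf$ with $\phi_X$, where $Qf\colon QX\to QY$ is the morphism given by~(ii) and satisfies $f\phi_X=\phi_Y Qf$. Combined with $fs=\phi_Y$ this forces the three morphisms $f\circ(sQf,\phi_X)$, $f\phi_X\circ\nabla$ and $f\phi_X w\circ i$ from $QX\sqcup QX$ to $Y$ to agree (here $\nabla$ is the fold map and $wi=\nabla$), so the square with top $(sQf,\phi_X)$, left $i$, right $f$ and bottom $f\phi_X w$ commutes. Since $Vi$ is a cofibration and $f\in(V^{-1}\cc)^{\lift}$, I obtain a diagonal filler $H\colon\Cyl(QX)\to X$ with $H\iota_0=sQf$ and $H\iota_1=\phi_X$; then $VH$ is a left homotopy from $V(sQf)$ to $V\phi_X$ along the cylinder $V(\Cyl(QX))$. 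Because $V\iota_0$ and $V\iota_1$ are weak equivalences, two applications of $2$-out-of-$3$ give $V(sQf)\in\cw\iff VH\in\cw\iff V\phi_X\in\cw$, and $V\phi_X\in\cw$ by~(i), so $V(sQf)\in\cw$.

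Finally I would feed these into the $2$-out-of-$6$ property, applied to the composable triple $VQX\xrightarrow{VQf}VQY\xrightarrow{Vs}VX\xrightarrow{Vf}VY$ in $\cm$: its first two-fold composite $Vs\circ VQf=V(sQf)$ is a weak equivalence by the previous step, and its last two-fold composite $Vf\circ Vs=V(fs)=V\phi_Y$ is a weak equivalence by~(i); hence each of $VQf$, $Vs$ and $Vf$ is a weak equivalence, so in particular $f\in V^{-1}\cw$. This proves $(V^{-1}\cc)^{\lift}\subseteq V^{-1}\cw$, and Theorem~\ref{ind}(i) then gives the left-induced model structure on $\ck$. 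I expect the only point needing genuine care to be the commutativity of the square in the key step — that is, checking that $f\circ(sQf,\phi_X)$, $f\phi_X\circ\nabla$ and $f\phi_X w\circ i$ really coincide — which rests precisely on the identities $fs=\phi_Y$, $f\phi_X=\phi_Y Qf$ and $wi=\nabla$; everything else is a routine diagram chase together with the $2$-out-of-$3$ and $2$-out-of-$6$ properties and the cocontinuity of $V$.
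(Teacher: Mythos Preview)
Your argument is correct and is exactly the dual of the paper's intended proof. The paper does not print a proof of Theorem~\ref{cyl} itself (it is quoted from \cite{HKRS}), but the source contains a commented-out proof of the dual Theorem~\ref{path}, and your steps mirror it precisely: obtain a section $s$ by lifting against $\varnothing\to QY$, use the cylinder on $QX$ to build a left homotopy $H$ between $sQf$ and $\phi_X$ via a second lift, deduce $V(sQf)\in\cw$ by $2$-out-of-$3$, and conclude $Vf\in\cw$ by $2$-out-of-$6$ applied to $VQX\to VQY\to VX\to VY$.
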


\begin{theorem} \label{path}
Let $(\cm, \cc,\cf,\cw)$ be an accessible model category, and let $\cn$ be a locally presentable category. Suppose we have the following adjunction
\begin{tz}
\node (M) at (2,0) {$\cm$};
\node (E) at (5,0) {$\cn$.};
\draw[->] (M) to [bend left=30] node[above] {$L$} (E);
\draw[->] (E) to [bend left=30] node[below] {$U$} (M);
\node at (3.5,0) {$\bot$};
\end{tz}
If the following hold:
\begin{rome}
\item for every $X\in \cn$, there is a morphism $\psi_X\colon X\to RX$ in $\cn$ such that $U(\psi_X)$ is a weak equivalence and $U(RX)$ is fibrant in $\cm$,
\item for every morphism $\alpha\colon X\to Y$ in $\cn$, there exists a morphism $R\alpha\colon RX\to RY$ in $\cn$ such that $\psi_Y\circ \alpha=R\alpha \circ \psi_X$, and
\item for every $X\in \cn$, there exists a factorization
\[ RX \stackrel{w}{\longrightarrow} \Path(RX) \stackrel{p}{\longrightarrow} RX\times RX \]
of the diagonal map in $\cn$ such that $U(p)$ is a fibration and $U(w)$ is a weak equivalence in $\cm$. 
\end{rome}
then the Acyclicity condition $^{\lift} (U^{-1}\cf)\subseteq U^{-1}\cw$ holds. In particular, the right-induced model structure on $\cn$ exists.
\end{theorem}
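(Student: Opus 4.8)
The plan is to prove Theorem \ref{path} by dualizing the proof of Theorem \ref{cyl}, which is itself \cite[Theorem 2.2.1]{HKRS}. Since this is the Path Object version of the Cylinder argument, the whole proof should be obtained from the Cylinder argument by passing to opposite categories: the adjunction $L\dashv U$ with $U\colon \cn\to\cm$ becomes $U^{\op}\dashv L^{\op}$ with $U^{\op}\colon\cn^{\op}\to\cm^{\op}$ playing the role of the left adjoint $V$, the accessible model structure $(\cc,\cf,\cw)$ on $\cm$ becomes the accessible model structure $(\cf^{\op},\cc^{\op},\cw^{\op})$ on $\cm^{\op}$ (accessibility is self-dual since local presentability is not, but here $\cm$ is assumed accessible and one uses that $\cm^{\op}$'s relevant weak factorization systems are the opposites — this needs a small remark, see below), the replacement $\psi_X\colon X\to RX$ becomes $\phi_X^{\op}\colon RX\to X$ in $\cn^{\op}$, the functoriality condition (ii) is literally the opposite of (ii) in Theorem \ref{cyl}, and the path object factorization $RX\to\Path(RX)\to RX\times RX$ becomes a cylinder factorization $RX\sqcup RX\to \Path(RX)\to RX$ in $\cn^{\op}$. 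Under this translation the conclusion $(V^{-1}\cc)^{\lift}\subseteq V^{-1}\cw$ of Theorem \ref{cyl} becomes exactly $^{\lift}(U^{-1}\cf)\subseteq U^{-1}\cw$, which is the Acyclicity condition needed here.

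Concretely, the steps I would carry out are as follows. First, observe that the right-lifted weak factorization systems exist by Theorem \ref{existence}, so there are candidate classes $(^{\lift}(U^{-1}(\cf\cap\cw)), U^{-1}\cf)$ and $(^{\lift}(U^{-1}\cf), U^{-1}\cf\cap\text{(something)})$; the only thing left to verify for a model structure is the Acyclicity condition, namely that every morphism in $\cn$ with the left lifting property against $U^{-1}\cf$ is sent by $U$ into $\cw$. Second, take such a morphism $\alpha\colon X\to Y$ in $\cn$, i.e. $\alpha\in{}^{\lift}(U^{-1}\cf)$. Using (i) and (ii), form the square relating $\alpha$ to $R\alpha\colon RX\to RY$ via the maps $\psi_X,\psi_Y$; since $U(\psi_X)$ and $U(\psi_Y)$ are weak equivalences, by the 2-out-of-3 property it suffices to show $U(R\alpha)$ is a weak equivalence. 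Third, show $R\alpha$ itself lies in $^{\lift}(U^{-1}\cf)$: this is where one uses that $\psi$ is a natural weak-equivalence-replacement — a standard retract-type argument shows that if $\alpha$ has the left lifting property against a class $\mathcal S$ then so does $R\alpha$, provided $\psi$ is appropriately compatible; one uses the factorization and lifting in $\cn$ to exhibit $R\alpha$ as (a retract of) $\alpha$ in the arrow category, or directly constructs the required lifts. Fourth, with $R\alpha\colon RX\to RY$ a map between objects whose images $U(RX),U(RY)$ are fibrant in $\cm$, and which has the left lifting property against $U^{-1}\cf$, use the path object factorization of condition (iii) together with the fibrancy to run the Quillen Path Object Argument: factor the diagonal $RX\to\Path(RX)\to RX\times RX$, and from the lifting property of $R\alpha$ against the fibration $U(p)$ (pulled back appropriately over $RY$) produce a "path-lift" homotopy exhibiting $U(R\alpha)$ as a weak equivalence. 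This is the classical argument that a map between fibrant objects having the LLP against all fibrations is a weak equivalence, executed with the given path object instead of a functorial one.

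The main obstacle, as in \cite[Theorem 2.2.1]{HKRS}, is the last step: showing that $U(R\alpha)$ is a weak equivalence from the bare hypotheses. One does not have functorial factorizations in $\cn$ and one only controls $\Path(RX)$ for objects of the form $RX$; so the homotopy lifting has to be set up carefully so that every object appearing is in the image of $R$ or is built from such by $\cn$-colimits whose $U$-image one understands, and one must check at each stage that the relevant map lands in $\cf$ (resp. $\cf\cap\cw$) after applying $U$ so that the lift against $R\alpha$ is legitimate. Concretely, given a fibration $U(p)\colon U(\Path(RY))\to U(RY)\times U(RY)$ one pulls back along $(\id, U(R\alpha)\circ {-})$-type maps and uses that $U$ preserves the pullback (which it does as a right adjoint) together with the fact that fibrations are stable under pullback; then the lifting property of $R\alpha$ produces the required section up to homotopy, and a 2-out-of-3 / retract argument finishes. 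I would also include a short remark at the start noting that the opposite of an accessible model category need not be accessible, so rather than literally dualizing one reproduces the argument; alternatively, since the excerpt only invokes Theorem \ref{path} in situations where the relevant lifted weak factorization systems are already known to exist (by Theorem \ref{existence}), the accessibility of $\cm$ suffices and no accessibility of $\cm^{\op}$ is needed — the duality is used only at the level of the formal argument, not as a black-box application of Theorem \ref{cyl} to $\cm^{\op}$.
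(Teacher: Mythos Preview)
Your overall strategy---dualize Theorem~\ref{cyl}, noting that one cannot literally apply it to $\cm^{\op}$ because local presentability is not self-dual---is correct, and matches the paper's intent. However, your concrete step~3 contains a real gap: you claim a ``retract-type argument'' shows that $R\alpha$ inherits the left lifting property against $U^{-1}\cf$ from $\alpha$, but the commuting square $\psi_Y\circ\alpha = R\alpha\circ\psi_X$ from conditions~(i)--(ii) does \emph{not} exhibit $R\alpha$ as a retract of $\alpha$ in the arrow category. You only have maps $\psi_X\colon X\to RX$ and $\psi_Y\colon Y\to RY$, not sections $RX\to X$, $RY\to Y$; so given a lifting problem for $R\alpha$ against some $f\in U^{-1}\cf$, precomposing with $\psi$ produces a lifting problem for $\alpha$, but the resulting lift $Y\to A$ does not yield a lift $RY\to A$. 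I see no way to carry out this transfer with the bare hypotheses, and your hedge ``or directly constructs the required lifts'' does not indicate how.

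The argument the paper has in mind (dual to \cite[Theorem~2.2.1]{HKRS}, and written out in the source) never passes the lifting property to $R\alpha$; it uses the lifting property of $\alpha$ throughout and closes with the \emph{2-out-of-6} property rather than 2-out-of-3. Concretely: lift $\alpha$ against the fibration $RX\to *$ (using that $U(RX)$ is fibrant) to obtain $s\colon Y\to RX$ with $s\circ\alpha=\psi_X$. Then lift $\alpha$ against the fibration $p\colon\Path(RY)\to RY\times RY$ of condition~(iii), with top map $w\circ R\alpha\circ\psi_X$ and bottom map $(R\alpha\circ s,\,\psi_Y)$, to obtain $r\colon Y\to\Path(RY)$. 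Since $\pi_i\circ p\circ w=\id_{RY}$ one has $\pi_i\circ p\in U^{-1}\cw$; from $\pi_2\circ p\circ r=\psi_Y\in U^{-1}\cw$ one gets $r\in U^{-1}\cw$, hence $R\alpha\circ s=\pi_1\circ p\circ r\in U^{-1}\cw$. Now apply 2-out-of-6 to the composable triple $X\xrightarrow{\alpha}Y\xrightarrow{s}RX\xrightarrow{R\alpha}RY$: both $s\circ\alpha=\psi_X$ and $R\alpha\circ s$ lie in $U^{-1}\cw$, so $\alpha\in U^{-1}\cw$. Note that the path object used is $\Path(RY)$, not $\Path(RX)$ as in your sketch.
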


Given a $\Nu$-enriched model category $\ca$ and a small $\Nu$-category $D$, we want to apply Theorems \ref{cyl} and \ref{path} to the underlying enriched Kan extension adjunctions 
\begin{tz}
\node (EC) at (-1,0) {$[D, \ca]_0$};
\node (ED) at (2,0) {$[\ob D, \ca]_0.$};
\path[->] (EC) edge node[b] {$i^*$} (ED);
\path[->] (ED) edge [bend right=45] node[above] {$i_!$} (EC);
\path[->] (ED) edge [bend left=45] node[below] {$i_*$} (EC);
\node at (0.5,0.5) {$\bot$};
\node at (0.5, -0.5) {$\bot$};
\end{tz} 
Conditions (iii) follow from the fact that the unit $I$ is cofibrant, respectively fibrant in~$\Nu$, as shown in Theorem \ref{under}. Hence, we introduce the notions of underlying cofibrant and fibrant replacements for the category $[D,\ca]_0$, which correspond to conditions (i) and (ii) of Theorems \ref{cyl} and \ref{path} respectively, applied to this setting. 

\begin{defn}
Let $\ca$ be a $\Nu$-enriched model category, and $D$ be a small $\Nu$-category. The category $[D,\ca]_0$ admits \textbf{underlying cofibrant replacements} if 
\begin{rome}
\item for every $\Nu$-functor $F\colon D\to \ca$, there is a $\Nu$-natural transformation ${\phi_F\colon QF\Rightarrow F}$ in $[D,\ca]_0$ such that, for every $d\in D$, $\phi_{Fd}\colon QFd\to Fd$ is a weak equivalence and $QFd$ is cofibrant in $\ca_0$, and
\item for every $\Nu$-natural transformation $\alpha\colon F\Rightarrow G$ in $[D,\ca]_0$, there exists a $\Nu$-natural transformation $Q\alpha\colon QF\Rightarrow QG$ in $[D,\ca]_0$ such that $\alpha\circ \phi_F=\phi_G\circ Q\alpha$. 
\end{rome}
The category $[D,\ca]_0$ admits \textbf{underlying fibrant replacements} if 
\begin{rome}
\item for every $\Nu$-functor $F\colon D\to \ca$, there is a $\Nu$-natural transformation ${\psi_F\colon F\Rightarrow RF}$ in $[D,\ca]_0$ such that, for every $d\in D$, $\psi_{Fd}\colon Fd\to RFd$ is a weak equivalence and $RFd$ is fibrant in $\ca_0$,
\item for every $\Nu$-natural transformation $\alpha\colon F\Rightarrow G$ in $[D,\ca]_0$, there exists a $\Nu$-natural transformation $R\alpha\colon RF\Rightarrow RG$ in $[D,\ca]_0$ such that $\psi_G\circ\alpha=R\alpha\circ \psi_F$. 
\end{rome}
\end{defn}

\begin{rem} \label{cofib}
In particular, if all objects of $\ca$ are cofibrant, then $[D,\ca]_0$ admits underlying cofibrant replacements, for every small $\Nu$-category $D$. To see this, given a $\text{$\Nu$-functor}$ $F\colon D\to \ca$, take $QF=F$ and $\phi_F=\id_F$. Dually, if all objects of $\ca$ are fibrant, then~$[D,\ca]_0$ admits underlying fibrant replacements, for every small $\Nu$-category $D$.
\end{rem}

If we suppose that the category of enriched diagrams $[D,\ca]_0$ has underlying cofibrant or fibrant replacements, by the Cylinder and Path Object arguments, it remains to show that the last condition holds when the unit $I$ is cofibrant or fibrant in $\Nu$ respectively. 

\begin{theorem} \label{under}
Let $(\Nu, \otimes, I)$ be a locally presentable base, and a model category. Suppose~$\ca$ is a locally $\Nu$-presentable $\Nu$-category, which admits an accessible $\Nu$-enriched model structure, and let $D$ be a small $\Nu$-category. 
\begin{rome}
\item If the unit $I\in \Nu$ is cofibrant, and the category $[D,\ca]_0$ admits underlying cofibrant replacements, then the injective model structure on $[D,\ca]_0$ exists, and is again $\Nu$-enriched. 
\item If the unit $I\in \Nu$ is fibrant, and the category $[D,\ca]_0$ admits underlying fibrant replacements, then the projective model structure on $[D,\ca]_0$ exists, and is again $\Nu$-enriched. 
\end{rome}
\end{theorem}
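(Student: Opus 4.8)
The plan is to apply the Cylinder Object argument (Theorem \ref{cyl}) for part (i) and the Path Object argument (Theorem \ref{path}) for part (ii), to the enriched Kan extension adjunctions between $[D,\ca]_0$ and $[\ObD,\ca]_0$, equipped with the pointwise model structure coming from the accessible $\Nu$-enriched model structure on $\ca_0$. As noted in Section \ref{sec5}, a $\Nu$-enriched model category is automatically $\Nu$-complete, so all the adjunctions in question exist, and $[D,\ca]_0$ is locally presentable by Corollary \ref{locpres}, while $[\ObD,\ca]_0$ is an accessible model category with the pointwise structure; hence the hypotheses of Theorems \ref{cyl} and \ref{path} on the ambient categories are satisfied. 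Once the Acyclicity conditions are verified, the injective (resp.\ projective) model structure exists, and it is $\Nu$-enriched by an argument identical to the proof of Theorem \ref{enriched}, using Lemma \ref{Cond}: tensors, cotensors, and colimits in $[D,\ca]_0$ are computed pointwise by Lemma \ref{cotens}, so the pushout/pullback corner maps have pointwise components which are (trivial) cofibrations/fibrations in $\ca_0$ since $\ca$ satisfies [MC7].

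\textbf{Verifying the Cylinder Object argument for (i).} We take $\ck = [D,\ca]_0$, $\cm = [\ObD,\ca]_0$, and $V = i^*$ the precomposition functor, which is left adjoint to $i_*$ the underlying enriched right Kan extension. Conditions (i) and (ii) of Theorem \ref{cyl} are exactly the statement that $[D,\ca]_0$ admits underlying cofibrant replacements: a $\Nu$-functor $F$ with $\phi_F\colon QF\Rightarrow F$ such that $i^*(\phi_F)$ is a pointwise weak equivalence and $i^*(QF)$ is pointwise cofibrant, together with functoriality on morphisms. It remains to produce, for each $\Nu$-functor $G\colon D\to\ca$, a factorization of the fold map $QG\sqcup QG\to QG$ in $[D,\ca]_0$ whose image under $i^*$ is a cofibration followed by a weak equivalence in $[\ObD,\ca]_0$, i.e.\ a pointwise cofibration followed by a pointwise weak equivalence. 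The idea is to build this cylinder using the tensor over $\Nu$: choose a cylinder object $I\sqcup I \xrightarrow{j} \Cyl(I)\xrightarrow{\sim} I$ for the unit $I$ in $\Nu$, where $j$ is a cofibration and the second map a weak equivalence (possible since $I$ is cofibrant). Then set $\Cyl(QG) := QG\otimes \Cyl(I)$, with the two maps induced by $j$ and the weak equivalence $\Cyl(I)\to I$ together with the unitor $QG\otimes I\cong QG$. Since tensors in $[D,\ca]_0$ are pointwise, the $d$-component of the resulting factorization is $QGd\otimes\Cyl(I)$, and one must check that $QGd\sqcup QGd \to QGd\otimes\Cyl(I)$ is a cofibration and $QGd\otimes\Cyl(I)\to QGd$ is a weak equivalence in $\ca_0$. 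The first follows from Lemma \ref{Cond}(i) applied to the cofibration $\varnothing\to QGd$ in $\ca_0$ (whose domain is $\varnothing$, so the pushout-corner map with $j$ is $QGd\otimes\Cyl(I)\amalg_{QGd\otimes(I\sqcup I)}QGd\otimes(I\sqcup I)$... ) — more carefully, one identifies $QGd\sqcup QGd \cong QGd\otimes(I\sqcup I)$ and uses that $-\otimes(-)$ applied to the cofibrant object $QGd$ and the cofibration $j\colon I\sqcup I\to\Cyl(I)$ yields a cofibration $QGd\otimes(I\sqcup I)\to QGd\otimes\Cyl(I)$, again via Lemma \ref{Cond}(i) with $\varnothing\to QGd$. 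The weak equivalence statement follows since $QGd$ is cofibrant and $-\otimes QGd$ is left Quillen, so it sends the weak equivalence between cofibrant objects $\Cyl(I)\to I$ in $\Nu$ to a weak equivalence in $\ca_0$ (Ken Brown's lemma, using [MC7] to see $-\otimes QGd$ is left Quillen when $QGd$ is cofibrant). This gives the Acyclicity condition $(\,(i^*)^{-1}\cc\,)^{\lift}\subseteq (i^*)^{-1}\cw$, and Theorem \ref{cyl} yields the left-induced, i.e.\ injective, model structure.

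\textbf{Verifying the Path Object argument for (ii).} This is formally dual: take $\cn = [D,\ca]_0$, $\cm = [\ObD,\ca]_0$, and $U = i^*$, now viewed as the right adjoint of the underlying enriched left Kan extension $i_!$. Conditions (i) and (ii) of Theorem \ref{path} are the statement that $[D,\ca]_0$ admits underlying fibrant replacements. For condition (iii), using that $I$ is fibrant, choose a path object $I\xrightarrow{\sim}\Path(I)\xrightarrow{p} I\times I$ in $\Nu$ with $p$ a fibration, and set $\Path(RG) := (RG)^{\Path(I)}$, with maps induced by cotensoring. Since cotensors and limits in $[D,\ca]_0$ are pointwise (Lemma \ref{cotens}), the $d$-component is $(RGd)^{\Path(I)}\to (RGd)^{I\times I}\cong RGd\times RGd$, and Lemma \ref{Cond}(ii) applied to the fibration $RGd\to *$ and the cofibration $p$... dually, to the fibration $RGd\to\ast$ and the cofibration $\varnothing\to\Path(I)$ — more precisely one uses that $(-)^{(-)}$ sends the cofibration $p^{\vee}$-type data and fibrant $RGd$ to a fibration, and that $(-)^{\Path(I)\to I}$ on the fibrant object $RGd$ is a weak equivalence since $(-)^{RGd}$-analogue is right Quillen on the fibrant object, hence sends the trivial cofibration $I\to\Path(I)$ between... (here one applies the dual of Ken Brown's lemma, using [MC7] to see $(-)^{RGd}$ is right Quillen). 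This verifies $^{\lift}(\,(i^*)^{-1}\cf\,)\subseteq (i^*)^{-1}\cw$, so Theorem \ref{path} gives the right-induced, i.e.\ projective, model structure, and the $\Nu$-enrichment follows as above.

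\textbf{Main obstacle.} The routine parts are the local presentability and accessibility bookkeeping and the enrichment verification (copied from Theorem \ref{enriched}). The real content is the construction of condition (iii) — the cylinder (resp.\ path) object in $[D,\ca]_0$ — and checking its image in $[\ObD,\ca]_0$ has the required pointwise properties. The key point, and the place where the cofibrancy (resp.\ fibrancy) of the unit $I$ is genuinely used, is that tensoring a cofibrant object of $\ca$ with the cofibration $I\sqcup I\to\Cyl(I)$ stays a cofibration and that $-\otimes(\text{cofibrant object})$ is left Quillen; both rely on $I$ being cofibrant so that $\varnothing\to\Cyl(I)$ and $\varnothing\to I$ are cofibrations in $\Nu$, which feeds into Lemma \ref{Cond}(i). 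Writing the corner-map arguments cleanly — in particular identifying $QGd\sqcup QGd$ with $QGd\otimes(I\sqcup I)$ and keeping track of which map is a cofibration versus a weak equivalence — is the one spot requiring care.
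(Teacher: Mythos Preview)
Your proposal is correct and follows essentially the same approach as the paper: apply the Cylinder Object argument for (i) and its dual for (ii), constructing the required cylinder of $QF$ as $QF\otimes\Cyl(I)$ from a good cylinder object for the cofibrant unit $I$, and verifying the pointwise cofibration and weak equivalence conditions via [MC7] (Lemma~\ref{Cond}) together with Ken Brown's Lemma. The paper's proof is identical in structure, and likewise defers the $\Nu$-enrichment to the argument of Theorem~\ref{enriched}; your exposition of the dual case (ii) is a bit garbled (you call $p$ a cofibration when it is a fibration, and the right-Quillen functor should be $(RGd)^{(-)}$ rather than $(-)^{RGd}$), but the paper itself simply declares (ii) dual, so this does not affect the comparison.
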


\begin{proof}
We prove (i). The proof for (ii) is dual. By Theorem \ref{cyl}, it is enough to show that, for every $\Nu$-functor $F\colon D\to \ca$ such that $Fd$ is cofibrant in $\ca$ for every $d\in D$, there exists a factorization
\[ F\sqcup F\Longrightarrow \Cyl(F)\Longrightarrow F \]
of the fold map in $[D,\ca]_0$ into a pointwise cofibration followed by a pointwise weak equivalence. Let $F\colon D\to \ca$ be a $\Nu$-functor such that $Fd$ is cofibrant in $\ca$ for every~${d\in D}$. Choose a good cylinder object 
\[ I\sqcup I \stackrel{i}{\longrightarrow} \Cyl(I) \stackrel{w}{\longrightarrow} I\]
for the unit $I\in \Nu$, where $i$ is a cofibration and $w$ is a weak equivalence in $\Nu$. Now apply~$F\otimes -$ to the sequence above, and get
\[ F\sqcup F\cong F\otimes (I\sqcup I) \xRightarrow{F\otimes i} F\otimes \Cyl(I) \xRightarrow{F\otimes w} F\otimes I\cong F. \]
For every $d\in D$, since $Fd$ is cofibrant in $\ca$, by Axiom [MC7], $Fd\otimes -\colon \Nu\to \ca$ preserves cofibrations and trivial cofibrations. It follows that $F\otimes i$ is a pointwise cofibration, since the tensor product is defined pointwise in $[D,\ca]_0$. Moreover, by Ken Brown's Lemma, $Fd\otimes -\colon \Nu\to\ca$ preserves weak equivalences between cofibrant objects, for every $d\in D$. Since $I$ is cofibrant in $\Nu$, the coproduct $I\sqcup I$ is also cofibrant, and hence $\Cyl(I)$ is cofibrant in $\Nu$, as $i$ is a cofibration. It follows from these observations that $F\otimes w$ is a pointwise weak equivalence in $[D,\ca]_0$. By Theorem \ref{cyl}, the Acyclicity condition is satisfied, and thus the injective model structure on $[D,\ca]_0$ exists. Moreover, it is $\Nu$-enriched copying the proof of Theorem \ref{enriched}.
\end{proof}

Theorem \ref{under} together with Remark \ref{cofib} yields examples of enriched model structures on symmetric spectra of simplicial sets and on chain complexes of modules over a commutative ring for which the injective or projective model structure on all categories of enriched diagrams exists. 

\begin{ex}[Spectra] \label{Sp}
Take $\Nu=\Sp$, the category of symmetric spectra of simplicial sets. This is a locally presentable base with the monoidal structure given by the smash product $\wedge$ for the tensor product and the sphere spectrum $\mathbb{S}$ for the unit. By \cite[III.4.13]{Sch}, the stable injective model structure on $\mathrm{Sp}^\Sigma$ is combinatorial, and every object is cofibrant in this model structure. It is enriched over the projective model structure on~$\mathrm{Sp}^\Sigma$, as shown in \cite[Theorem 5.3.7, parts 3 and 5]{HSS}.  Moreover, the sphere spectrum~$\mathbb{S}$ is cofibrant in the projective model structure. Hence, by Theorem \ref{under} (i), the injective model structure on the category of spectral diagrams $[D,(\Sp)^{\inj}]$ exists and is enriched over $(\Sp)^{\proj}$, for every small spectral category $D$. Furthermore, the projective stable model structure on $\Sp$, given in \cite[III.4.11]{Sch}, is also combinatorial, and is enriched over itself, by \cite[Theorem 5.3.7, parts 1 and 5]{HSS}. In this model structure, all objects are fibrant. Hence, by Theorem \ref{under} (ii), the projective model structure on~$[D,(\Sp)^{\proj}]$ exists and is enriched over $(\Sp)^{\proj}$, for every small spectral category $D$.
\end{ex}

\begin{ex}[Chain complexes] \label{ChR}
Given a commutative ring $R$, take $\Nu=\ChR$, the category of chain complexes of $R$-modules. This is a locally presentable base with the monoidal structure given by the tensor product $\otimes_R$ and the ring $R$ concentrated in degree~$0$ for the unit. The injective model structure on $\ChR$, introduced by Hovey in \cite[Theorem 2.3.13]{Hov}, is combinatorial, and every object is cofibrant in this model structure. By \cite[Proposition 3.3]{Shi}, it is enriched over the projective model structure on $\ChR$. Moreover, the ring~$R$ is cofibrant in the projective model structure. Hence, by Theorem~\ref{under}~(i), the injective model structure on the category of dg functors~$[D,\ChR^{\inj}]$ exists and is enriched over~$\ChR^{\proj}$, for every small dg category $D$. Furthermore, the projective model structure on $\ChR$, due to Quillen \cite{Qui}, is also combinatorial, by \cite[Theorem 2.3.11]{Hov}, and is enriched over itself, by \cite[Theorem 1.4]{BMR}. In this model structure, all objects are fibrant. Hence, by Theorem \ref{under} (ii), the projective model structure on $[D,\ChR^{\proj}]$ exists and is enriched over $\ChR^{\proj}$, for every small dg category~$D$. Dg functors $D\to \ChR$ actually corresponds to dg $D$-modules, and we recover the projective and injective model structures on the category of dg $D$-modules established in \cite[Theorem 3.2]{Keller} with these two examples. 
\end{ex}

\begin{ex}[Hurewicz model structure] \label{Hur}
For $R$ a commutative ring, there is another model structure on $\ChR$: the Hurewicz model structure, due to Christensen and Hovey in \cite{CH}, Cole in \cite{Cole}, and Schw\"anzl and Vogt in \cite{SV}. Christensen and Hovey show that this model structure is not cofibrantly generated in \cite[Section 5.4]{CH}. It is anyway accessible, by \cite[Section 6.4]{BMR} and \cite[Theorem 4.2.1]{HKRS}. Moreover, \cite[Theorem 1.15]{BMR} states that it is a model structure which is enriched over itself, and in which all objects are cofibrant and fibrant. Hence, by Theorem~\ref{under}, or also by Theorem \ref{enriched} and Remark \ref{bla}, the injective and projective model structures on $[D, \ChR^{\mathrm{Hur}}]$ exist and are enriched over $\ChR^{\mathrm{Hur}}$, for every small dg category~$D$. 
\end{ex}

\section{Application: modules over an operad} \label{sec7}

Let $(\Nu,\otimes, I)$ be a locally presentable base and a model category. Let $O$ be an operad in $\Nu$, and let $\ca$ be a $\Nu$-enriched model category. Adapting Theorems~\ref{enriched} and~\ref{under}, we can give criteria for the injective and projective model structures on $\Mod_\ca(O)$ to exist, where $\Mod_\ca(O)$ denotes the category of right modules over the operad~$O$ with values in $\ca$. This is a direct consequence of a result by Arone and Turchin in \cite{AT} which characterizes such modules in terms of contravariant $\Nu$-functors into $\ca$ from a small $\Nu$-category $\cf(O)$, which we first describe here.

\begin{notation}
Let $S,T$ be two finite sets. We write $F(S,T)$ for the set of maps from~$S$ to~$T$.
\end{notation}

\begin{defn}
Let $O$ be an operad in $\Nu$. The $\Nu$-category $\cf (O)$ has finite sets as objects, and its hom-objects are defined to be 
\[ \begin{array}{lr}
\displaystyle \cf (O)(S,T) = \bigcup_{\alpha\in F(S,T)} \bigotimes_{t\in T} O(\alpha^{-1}(t))  & \in \Nu, 
\end{array}\]
for all finite sets $S,T$. See \cite[Definition 3.1]{AT} for more details.
\end{defn}

This gives rise to the following characterization of the right modules over $O$.

\begin{prop}[\!{\cite[Lemma 3.3]{AT}}] \label{mod}
Let $O$ be an operad in $\Nu$, and let $\ca$ be a tensored and cotensored $\Nu$-category. Then there is an equivalence of categories
\[ \Mod_\ca(O) \simeq [\cf(O)^{\op}, \ca]_0. \]
\end{prop}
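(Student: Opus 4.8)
The plan is to unpack both sides of the claimed equivalence $\Mod_\ca(O) \simeq [\cf(O)^{\op}, \ca]_0$ and exhibit mutually inverse functors. First I would recall the definition of a right $O$-module with values in $\ca$: it is a symmetric sequence $M = \{M(n)\}_{n\ge 0}$ of objects of $\ca$ (equivalently, a functor on the groupoid of finite sets and bijections), equipped with right action maps $M(S) \otimes \bigotimes_{t\in T} O(\alpha^{-1}(t)) \to M(T')$ — more precisely, for each map $\alpha\colon T \to S$ of finite sets one gets a structure map — subject to the usual associativity and unit axioms for an operad action. The key observation, which is exactly the content of \cite[Lemma 3.3]{AT}, is that these action maps are precisely the data of a $\Nu$-functor out of $\cf(O)^{\op}$: the hom-object $\cf(O)(S,T) = \bigcup_{\alpha\in F(S,T)} \bigotimes_{t\in T} O(\alpha^{-1}(t))$ has been designed so that composition in $\cf(O)$ encodes operadic composition in $O$, and the identities are given by the unit of $O$.

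Concretely, I would define a functor $\Phi\colon \Mod_\ca(O) \to [\cf(O)^{\op},\ca]_0$ by sending a module $M$ to the $\Nu$-functor $\widehat{M}$ with $\widehat M(S) = M(S)$ on objects, and on hom-objects using the $\Nu$-adjunction (since $\ca$ is tensored and cotensored, and using the enriched Yoneda-type correspondence) to turn the structure map $M(S)\otimes \cf(O)(T,S) \to M(T)$ — note the variance: a morphism $S\to T$ in $\cf(O)^{\op}$ is a morphism $T\to S$ in $\cf(O)$ — into a morphism $\cf(O)(T,S) \to \ca(M(S),M(T))$ in $\Nu$. The associativity axiom for the $O$-action translates exactly into the functoriality (compatibility with composition) of $\widehat M$, and the unit axiom into preservation of identities. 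Conversely, a functor $G\colon \cf(O)^{\op}\to\ca$ gives a symmetric sequence $S \mapsto G(S)$ — the symmetric group actions come from the automorphisms of finite sets inside $\cf(O)$ — together with, via the tensor-hom adjunction in $\ca$, structure maps $G(S)\otimes\cf(O)(T,S)\to G(T)$, hence in particular $O$-action maps by restricting along the inclusions $\bigotimes_t O(\alpha^{-1}(t)) \hookrightarrow \cf(O)(T,S)$. One checks $\Phi$ and this inverse construction are mutually quasi-inverse, and that they are compatible with morphisms (a map of $O$-modules is precisely a $\Nu$-natural transformation), giving an equivalence — in fact an isomorphism — of categories.

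The main obstacle, and the only genuinely delicate point, is getting the bookkeeping of variance, symmetric group actions, and the indexing by maps of finite sets exactly right, so that composition in $\cf(O)$ matches operadic composition and so that the union $\bigcup_{\alpha}$ in the definition of $\cf(O)(S,T)$ (a coproduct-like construction indexed over $F(S,T)$) interacts correctly with the tensor in $\ca$ under the adjunction $\ca_0(X\otimes K, Y)\cong \Nu(K,\ca(X,Y))\cong\ca_0(X,Y^K)$. Since all of this is carried out in detail in \cite[Definition 3.1, Lemma 3.3]{AT}, the cleanest route is to invoke that result directly: the equivalence holds for $\ca = \Nu$ by Arone–Turchin, and the proof goes through verbatim for any tensored and cotensored $\Nu$-category $\ca$, since the only features of $\Nu$ used are the tensor-cotensor adjunction and the enriched Yoneda lemma, both of which are available for such $\ca$. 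I would therefore present the proof as an explicit description of the two functors together with a remark that the verification of the axioms is exactly as in \cite{AT}, replacing $\Nu$ by $\ca$ throughout.
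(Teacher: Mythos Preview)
Your proposal is a reasonable sketch of how one might establish the equivalence, and ultimately you also suggest invoking \cite[Lemma 3.3]{AT} directly. The paper, however, gives no proof of this proposition whatsoever: it is stated purely as a citation of Arone--Turchin's result and used as a black box to translate the operadic module category into an enriched functor category so that Theorems~\ref{enriched} and~\ref{under} can be applied. So in a sense your approach is strictly more detailed than the paper's, which contains no argument at all for this statement; your final suggestion---to simply invoke \cite{AT}---is exactly what the paper does.
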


We first adapt Theorem \ref{enriched} to this setting.

\begin{cor} \label{operad}
Let $(\Nu,\otimes, I)$ be a locally presentable base and a model category, and let~$O$ be an operad in $\Nu$. Suppose $\ca$ be a locally $\Nu$-presentable accessible $\Nu$-enriched model category. 
\begin{rome}
\item If the functors $O(S)\otimes -\colon \ca_0\to \ca_0$ preserve cofibrations for all $S\in \cf(O)$, there is an injective model structure on $\Mod_\ca(O)$, where the weak equivalences and cofibrations are defined pointwise, and this model structure is enriched over $\Nu$. 
\item If the functors $O(S)\otimes -\colon \ca_0\to \ca_0$ preserve trivial cofibrations for all $S\in \cf(O)$, there is a projective model structure on $\Mod_\ca(O)$, where the weak equivalences and fibrations are defined pointwise, and this model structure is enriched over~$\Nu$. 
\end{rome}
\end{cor}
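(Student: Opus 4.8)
The plan is to reduce Corollary \ref{operad} to Theorem \ref{enriched} via the equivalence of categories in Proposition \ref{mod}. The key observation is that $\Mod_\ca(O) \simeq [\cf(O)^{\op}, \ca]_0$, so once we identify the hom-objects of the $\Nu$-category $\cf(O)^{\op}$ in terms of the operad $O$, the hypotheses on $O(S)\otimes -$ should translate directly into the hypotheses of Theorem \ref{enriched} on $-\otimes \cf(O)^{\op}(S,T)$. First I would observe that $\cf(O)^{\op}(S,T) = \cf(O)(T,S) = \bigcup_{\alpha\in F(T,S)} \bigotimes_{s\in S} O(\alpha^{-1}(s))$, which is a coproduct indexed over $F(T,S)$ of finite tensor products of objects of the form $O(U)$ for various finite sets $U$ (namely the fibers of maps $\alpha$). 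Since $\cf(O)$ has finite sets as objects, only finitely many sets $U$ occur, and in fact every $U$ arising as a fiber is a finite set, hence isomorphic to some object $S' \in \cf(O)$.

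Next I would argue that the functors $-\otimes \cf(O)^{\op}(S,T)\colon \ca_0 \to \ca_0$ are built from the functors $-\otimes O(U)$ by iterated tensoring and by taking coproducts. More precisely, since $\otimes$ is a symmetric monoidal structure, $A \otimes \bigl(\bigotimes_{s\in S} O(\alpha^{-1}(s))\bigr)$ can be obtained by applying the various $-\otimes O(\alpha^{-1}(s))$ one after another, and $A \otimes \bigl(\coprod_i X_i\bigr) \cong \coprod_i (A\otimes X_i)$ since $-\otimes X$ is a left adjoint for each $X$. By the dual of Remark \ref{bla}, or directly from Lemma \ref{Cond}, the coproduct of a (trivial) cofibration is again a (trivial) cofibration, and a composite of (trivial) cofibration-preserving functors preserves (trivial) cofibrations. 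Therefore, if each $O(S)\otimes -$ preserves cofibrations (resp.\ trivial cofibrations), then so does each $-\otimes \cf(O)^{\op}(S,T)$. Here I would also remark that $-\otimes X$ and $X\otimes -$ agree up to the symmetry isomorphism, so the hypothesis phrased with $O(S)\otimes -$ matches the hypothesis of Theorem \ref{enriched} phrased with $-\otimes D(d,d')$.

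Having verified the hypotheses, I would apply Theorem \ref{enriched} with $D = \cf(O)^{\op}$, which is a small $\Nu$-category since it has only finite sets as objects (or a skeleton thereof), and with $\ca$ the given locally $\Nu$-presentable accessible $\Nu$-enriched model category. This yields the injective (resp.\ projective) model structure on $[\cf(O)^{\op}, \ca]_0$, with cofibrations and weak equivalences (resp.\ fibrations and weak equivalences) defined pointwise, and this model structure is $\Nu$-enriched. Transporting along the equivalence of categories from Proposition \ref{mod} gives the desired model structure on $\Mod_\ca(O)$, and since equivalences of categories transport enrichments, it remains $\Nu$-enriched. I expect the main obstacle to be purely bookkeeping: making precise that ``tensoring with a coproduct of finite tensor products of the $O(U)$'' preserves the relevant classes, i.e.\ assembling the closure of (trivial-)cofibration-preserving functors under composition and under coproducts, and checking that the finiteness of the objects of $\cf(O)$ really does keep $\cf(O)^{\op}$ small. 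No deep new idea is needed beyond Theorem \ref{enriched} and the formal properties of $\otimes$.
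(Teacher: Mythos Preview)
Your proposal is correct and follows the same route the paper intends: the corollary is stated immediately after ``We first adapt Theorem~\ref{enriched} to this setting,'' so the paper's implicit argument is precisely to apply Theorem~\ref{enriched} with $D=\cf(O)^{\op}$ via Proposition~\ref{mod}, and you have spelled out the details that the paper leaves to the reader. Two small remarks: the closure of (trivial) cofibrations under coproducts is a general model-category fact (left classes of weak factorization systems are closed under coproducts), not really a consequence of Remark~\ref{bla} or Lemma~\ref{Cond}; and the justification that $A\otimes(\coprod_i X_i)\cong\coprod_i(A\otimes X_i)$ comes from $A\otimes-\colon\Nu\to\ca_0$ being a left adjoint (to $\ca(A,-)$), not from $-\otimes X$ being one.
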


\begin{rem}
In particular, if the operad $O$ is pointwise cofibrant, i.e.~if all objects~$O(S)$, for $S\in \cf(O)$, are cofibrant in $\Nu$, then the injective and projective model structures on~$\Mod_\ca(O)$ exist.
\end{rem}

\begin{ex}
As in Example \ref{sSet}, taking $\Nu=\sset_{(*)}$, the injective and projective model structures on $\Mod_\ca(O)$ exist and are simplicial, for every simplicial operad $O$ and every enriched locally presentable simplicial accessible model category $\ca$, e.g.~$\ca=\sset_{(*)}, \Sp$.
\end{ex}

We also give a version of Theorem \ref{under} in this setting. 

\begin{cor} \label{unmod}
Let $(\Nu,\otimes,I)$ be a locally presentable base and a model category, and let~$O$ be an operad in $\Nu$. Suppose $\ca$ be a locally $\Nu$-presentable accessible $\Nu$-enriched model category.
\begin{rome}
\item If the unit $I\in \Nu$ is cofibrant and the category $\Mod_\ca(O)$ admits underlying cofibrant replacements, e.g.~if all objects of $\ca$ are cofibrant, there is an injective model structure on $\Mod_\ca(O)$, which is enriched over $\Nu$.
\item If the unit $I\in \Nu$ is fibrant and the category $\Mod_\ca(O)$ admits underlying fibrant replacements, e.g.~if all objects of $\ca$ are fibrant, there is a projective model structure on $\Mod_\ca(O)$, which is enriched over $\Nu$.
\end{rome}
\end{cor}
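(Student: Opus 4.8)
The plan is to reduce Corollary \ref{unmod} to Theorem \ref{under} via the equivalence of categories $\Mod_\ca(O) \simeq [\cf(O)^{\op}, \ca]_0$ provided by Proposition \ref{mod}, taking $D = \cf(O)^{\op}$. First I would note that $\cf(O)^{\op}$ is a small $\Nu$-category, and that, since $\ca$ is a locally $\Nu$-presentable accessible $\Nu$-enriched model category, all the hypotheses of Theorem \ref{under} concerning $\ca$, $\Nu$, and the unit $I$ are already in place. Thus the only thing to translate is the condition on underlying cofibrant (resp.~fibrant) replacements: the category $\Mod_\ca(O)$ admits underlying cofibrant (resp.~fibrant) replacements if and only if $[\cf(O)^{\op}, \ca]_0$ does, because the equivalence of categories transports the relevant $\Nu$-natural transformations, and pointwise cofibrancy/fibrancy and pointwise weak equivalences are detected objectwise in $\ca_0$, which is preserved under the equivalence. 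Once this is observed, Theorem \ref{under} applied to $D = \cf(O)^{\op}$ yields the injective (resp.~projective) model structure on $[\cf(O)^{\op}, \ca]_0$, which is $\Nu$-enriched, and transporting back along the equivalence of Proposition \ref{mod} gives the claimed model structure on $\Mod_\ca(O)$, again $\Nu$-enriched since enrichment is a property transported by equivalences of $\Nu$-categories.

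Second, for the ``e.g.'' clauses, I would invoke Remark \ref{cofib}: if all objects of $\ca$ are cofibrant then $[D,\ca]_0$ admits underlying cofibrant replacements for every small $\Nu$-category $D$ (taking $QF = F$, $\phi_F = \id_F$), and dually for the fibrant case. Specializing to $D = \cf(O)^{\op}$ and transporting along Proposition \ref{mod} immediately gives that $\Mod_\ca(O)$ admits underlying cofibrant (resp.~fibrant) replacements under these hypotheses, so the conclusions of parts (i) and (ii) apply.

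The main subtlety — though it is routine rather than deep — is the compatibility between the equivalence of categories in Proposition \ref{mod} and the homotopical structure: one must check that the weak equivalences, cofibrations, and fibrations of the injective/projective model structures on $[\cf(O)^{\op},\ca]_0$ correspond under the equivalence to sensible classes of morphisms in $\Mod_\ca(O)$ (which are pointwise weak equivalences and cofibrations, resp.~fibrations, under the identification). Since Proposition \ref{mod} is an honest equivalence of categories and the model-structure data on $[\cf(O)^{\op},\ca]_0$ is described pointwise (on objects of $\cf(O)^{\op}$, i.e.~finite sets), this transport is formal. I also should double-check that the hypotheses I state in Corollary \ref{operad} and \ref{unmod} on the functors $O(S) \otimes -$ match, under the equivalence, the hypotheses of Theorems \ref{enriched} and \ref{under} on the functors $- \otimes D(d,d')$; here one uses the explicit formula $\cf(O)(S,T) = \bigcup_{\alpha \in F(S,T)} \bigotimes_{t\in T} O(\alpha^{-1}(t))$ for the hom-objects, so that tensoring with a hom-object of $\cf(O)^{\op}$ is built from coproducts and tensor products of the $O(S)$, and coproducts and tensor products of (trivial) cofibrations are again (trivial) cofibrations in a $\Nu$-enriched model category. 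This last bookkeeping step is where I would spend the most care, but it presents no real obstacle.
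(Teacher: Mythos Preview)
Your approach is correct and matches the paper's: the corollary is stated without explicit proof because it follows immediately from Theorem~\ref{under} applied to $D = \cf(O)^{\op}$ via the equivalence of Proposition~\ref{mod}, together with Remark~\ref{cofib} for the ``e.g.'' clauses.

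One small point: your final paragraph about checking that the hypotheses on $O(S)\otimes -$ match the hypotheses on $-\otimes D(d,d')$ is unnecessary here. Theorem~\ref{under}, unlike Theorem~\ref{enriched}, imposes \emph{no} condition on tensoring with hom-objects; its hypotheses concern only the cofibrancy (resp.\ fibrancy) of the unit $I$ and the existence of underlying cofibrant (resp.\ fibrant) replacements. That bookkeeping is relevant to Corollary~\ref{operad}, not to Corollary~\ref{unmod}, so you can drop the last paragraph entirely.
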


The examples at the end of Section \ref{sec6} give rise to examples of categories of modules over an operad, which admits injective and projective model structures.

\begin{ex} \label{Spp}
As in Example \ref{Sp}, let $\Nu=(\Sp)^{\proj}$ be the category of symmetric spectra of simplicial sets with the projective stable model structure. Suppose $O$ is a spectral operad. If~$\ca=(\Sp)^{\inj}$ is endowed with the injective stable model structure, there is an injective model structure on $\Mod_{\Sp}(O)$, which is enriched over $(\Sp)^{\proj}$, by Corollary~\ref{unmod}~(i). On the other hand, if we take $\ca=\Nu=(\Sp)^{\proj}$, there is a projective model structure on $\Mod_{\Sp}(O)$, which is enriched over $(\Sp)^{\proj}$, by Corollary \ref{unmod} (ii). 
\end{ex}

\begin{ex}
The same reasoning as above applies to the setting of Example \ref{ChR}. Given a commutative ring $R$, let $\Nu=\ChR^{\proj}$ be the category of chain complexes of $R$-modules with the projective model structure. If $O$ is a dg operad, then there exist both injective and projective model structures on $\Mod_{\ChR}(O)$, and these are enriched over $\ChR^{\proj}$. 
\end{ex}

\begin{ex}
Given a commutative ring $R$, let $\Nu=\ca=\ChR^{\mathrm{Hur}}$ be the category of chain complexes of $R$-modules with the Hurewicz model structure. Using results of Example \ref{Hur}, if $O$ is a dg operad, then there exist both injective and projective model structures on $\Mod_{\ChR^{\mathrm{Hur}}}(O)$, and these are enriched over $\ChR^{\mathrm{Hur}}$, by Corollary \ref{operad} or \ref{unmod}.
\end{ex}

\section{Properness} \label{sec8}

In this last section, we prove that the properness of the initial model structure transfers to the injective and projective model structures on the category of enriched diagrams under mild additional assumptions to the ones of Theorem \ref{injproj}. Recall that a model category is \emph{left-proper} if weak equivalences are preserved under pushouts along cofibrations. Dually, it is \emph{right-proper} if weak equivalences are preserved under pullbacks along fibrations. We give the proofs for the injective case, while the proofs for the projective one are dual.

\begin{prop} \label{leftproper}
Let $(\Nu, \otimes, I)$ be a locally presentable base. Suppose $\ca$ is a locally $\Nu$-presentable $\Nu$-category, whose underlying category $\ca_0$ admits a left-proper accessible model structure. Let $D$ be a small $\Nu$-category.
\begin{rome}
\item If the functors $-\otimes D(d,d')\colon \ca_0\to \ca_0$ preserve cofibrations for all $d,d'\in D$, then the injective model structure on $[D,\ca]_0$ exists, and is also left-proper.
\item If the functors $-\otimes D(d,d')\colon \ca_0\to \ca_0$ preserve cofibrations and trivial cofibrations for all~$d,d'\in D$, the projective model structure on $[D, \ca]_0$ exists, and is also left-proper. 
\end{rome}
\end{prop}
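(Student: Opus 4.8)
The plan is to reduce everything to the pointwise situation, exactly as in the existence proof of Theorem \ref{injproj}. First, the existence of the injective (resp.\ projective) model structure on $[D,\ca]_0$ in case (i) (resp.\ (ii)) is immediate from Theorem \ref{injproj}, since preserving cofibrations and trivial cofibrations is a stronger hypothesis than preserving trivial cofibrations alone. So the content is the statement about left-properness. I would treat case (i) first and deduce (ii) afterwards.

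For case (i): in the injective model structure, cofibrations and weak equivalences are defined pointwise. Given a pushout square in $[D,\ca]_0$
\[
\begin{tikzcd}
F \arrow[r] \arrow[d, "\alpha"'] & H \arrow[d] \\
G \arrow[r] & G\amalg_F H
\end{tikzcd}
\]
with $\alpha$ an injective cofibration and $F\to H$ an injective weak equivalence, I would evaluate at each $d\in D$. Since colimits in $[D,\ca]_0$ are computed pointwise (see \cite[Section 3.3]{KEL} together with Lemma \ref{cotens}, or directly the fact that conical colimits in enriched functor categories are pointwise), the square at $d$ is again a pushout square in $\ca_0$, with $\alpha_d$ a cofibration and $F d\to Hd$ a weak equivalence. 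By left-properness of $\ca_0$, the map $Gd\to (G\amalg_F H)d$ is a weak equivalence. As this holds for every $d\in D$, the map $G\to G\amalg_F H$ is a pointwise weak equivalence, hence an injective weak equivalence. This proves (i).

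For case (ii): now the projective model structure has fibrations and weak equivalences defined pointwise, but cofibrations are \emph{not} pointwise in general, so the above argument does not directly apply. The standard fix is to use the fact that a projective cofibration is in particular a pointwise cofibration once tensoring with the hom-objects $D(d,d')$ preserves (trivial) cofibrations. Concretely, under the hypothesis of (ii) the left adjoint $i_!\colon[\ObD,\ca]_0\to[D,\ca]_0$ sends pointwise cofibrations to pointwise cofibrations (the value of $i_! X$ at $d$ is $\coprod_{d'} Xd'\otimes D(d',d)$, a coproduct of cofibrations between the relevant objects, hence a cofibration), so the generating projective cofibrations — and thus all projective cofibrations, being retracts of transfinite composites of pushouts of these — are pointwise cofibrations; similarly projective trivial cofibrations are pointwise trivial cofibrations. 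Hence every projective cofibration is in particular a pointwise cofibration. With this observation, I repeat the pushout argument verbatim: evaluate the pushout square at each $d\in D$, use that $\alpha_d$ is a cofibration and $Fd\to Hd$ is a (pointwise, hence) weak equivalence, and apply left-properness of $\ca_0$ pointwise.

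The main obstacle is case (ii): one must justify carefully that projective cofibrations are pointwise cofibrations. This requires identifying the generating projective cofibrations as the images under $i_!$ of the generating pointwise cofibrations of $[\ObD,\ca]_0$, computing $i_!$ explicitly via the enriched coend/coproduct formula $i_!X(d)\cong\coprod_{d'\in D} Xd'\otimes D(d',d)$, and then propagating the pointwise-cofibration property along retracts and transfinite compositions of pushouts — which is where the accessibility/cofibrant-generation bookkeeping from Theorem \ref{injproj} and \cite{GKR} is needed. If one prefers to avoid generating-cofibration arguments in the possibly non-cofibrantly-generated setting, an alternative is to observe that $i^*\colon[D,\ca]_0\to[\ObD,\ca]_0$ creates the weak equivalences and that a projective cofibration $\alpha$ has the left lifting property against pointwise trivial fibrations, from which one extracts that each $\alpha_d$ is a cofibration by a lifting argument against trivial fibrations of $\ca_0$, dual to the computation already carried out in the proof of Theorem \ref{injproj}. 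Everything else is routine diagram chasing using that colimits in $[D,\ca]_0$ are pointwise.
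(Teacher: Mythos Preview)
Your proposal is correct and follows the same route as the paper. For (i) your argument is exactly the paper's: cofibrations and weak equivalences are pointwise in the injective model structure and colimits in $[D,\ca]_0$ are computed pointwise, so left-properness of $\ca_0$ transfers immediately. For (ii) the paper simply declares the argument ``dual'', the dual being that projective cofibrations are in particular pointwise cofibrations via the lifting argument of Theorem~\ref{injproj} (this step is spelled out in the companion Proposition~\ref{rightproper}); your ``alternative'' argument is precisely this, and is the right one to use here since the model structure is only assumed accessible, not cofibrantly generated. Your first approach to (ii), via generating projective cofibrations as images under $i_!$, is not available in this generality---you correctly flag this yourself---so you should lead with the lifting argument rather than present it as a fallback.
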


\begin{proof}
Since colimits are computed pointwise in $[D,\ca]_0$, (i) follows directly from the fact that cofibrations and weak equivalences in the injective model structure are defined pointwise.
\end{proof}

\begin{prop} \label{rightproper}
Let $(\Nu, \otimes, I)$ be a locally presentable base. Suppose $\ca$ is a locally $\Nu$-presentable $\Nu$-category, whose underlying category $\ca_0$ admits a right-proper accessible model structure. Let $D$ be a small $\Nu$-category.
\begin{rome}
\item If the functors $-\otimes D(d,d')\colon \ca_0\to \ca_0$ preserve cofibrations and trivial cofibrations for all $d,d'\in D$, the injective model structure on $[D,\ca]_0$ exists, and is also right-proper.
\item If the functors $-\otimes D(d,d')\colon \ca_0\to \ca_0$ preserve trivial cofibrations for all ${d,d'\in D}$, the projective model structure on $[D, \ca]_0$ exists, and is also right-proper. 
\end{rome}
\end{prop}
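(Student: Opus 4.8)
The plan is to handle the two parts separately, mirroring the split already used in Theorem~\ref{injproj} and Proposition~\ref{leftproper}. Part (ii) should take essentially one sentence: the projective model structure on $[D,\ca]_0$ exists by Theorem~\ref{injproj}~(ii), and since its fibrations and weak equivalences are defined pointwise while pullbacks in $[D,\ca]_0$ are computed pointwise, a pullback of a (pointwise) weak equivalence along a (pointwise) fibration is, evaluated at each $d\in D$, a pullback of a weak equivalence along a fibration in the right-proper model category $\ca_0$, hence a weak equivalence; so the total map is a pointwise weak equivalence. This is the exact dual of the argument proving Proposition~\ref{leftproper}~(i), and I would dispatch it accordingly.

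The substance is in part (i). Here the injective model structure exists by Theorem~\ref{injproj}~(i) thanks to the hypothesis that the functors $-\otimes D(d,d')$ preserve cofibrations, but its fibrations are \emph{not} defined pointwise, so the naive argument breaks down. The step I would establish first is a lemma: if, in addition, $-\otimes D(d,d')$ preserves trivial cofibrations for all $d,d'\in D$, then every injective fibration $\eta\colon F\Rightarrow G$ in $[D,\ca]_0$ is a pointwise fibration. The proof runs exactly like the Acyclicity argument in the proof of Theorem~\ref{injproj}, but with trivial cofibrations in place of cofibrations: since cofibrations and weak equivalences in the injective model structure are pointwise, $\eta$ has the right lifting property against every pointwise trivial cofibration; fixing $d\in D$ and a trivial cofibration $j\colon A\to B$ in $\ca_0$, the enriched Yoneda lemma and the fact that $\ca$ is tensored over $\Nu$ give a natural isomorphism $\ca(A,Fd)\cong [D,\ca](A\otimes D(d,-),F)$ (and similarly for $B$ and $G$), so $\eta_d$ has the right lifting property against $j$ in $\ca_0$ if and only if $\eta$ has it against $j\otimes D(d,-)$ in $[D,\ca]_0$; each component $j\otimes D(d,d')$ is a trivial cofibration by hypothesis, hence $j\otimes D(d,-)$ is a pointwise trivial cofibration, i.e.~an injective trivial cofibration, against which $\eta$ does lift. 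As $d$ and $j$ were arbitrary, $\eta_d$ is a fibration in $\ca_0$ for every $d$.

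Granting this lemma, right-properness is then immediate: in a pullback square in $[D,\ca]_0$ in which the map being pulled back is an injective weak equivalence (hence pointwise) and the map along which it is pulled back is an injective fibration (hence pointwise, by the lemma), evaluation at each $d\in D$ produces a pullback square in $\ca_0$ of a weak equivalence along a fibration, which is a weak equivalence since $\ca_0$ is right-proper; so the pulled-back map is a pointwise weak equivalence, i.e.~an injective weak equivalence. I expect the only genuine obstacle to be the lemma that injective fibrations are pointwise fibrations, and even that is a direct transcription of the tensor/Yoneda computation already carried out for Theorem~\ref{injproj}; the pointwise computation of pullbacks and the transfer of right-properness from $\ca_0$ are routine.
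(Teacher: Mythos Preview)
Your proposal is correct and follows essentially the same approach as the paper: the paper also reduces (i) to the claim that injective fibrations are pointwise fibrations, and proves this by rerunning the Yoneda/tensor argument from Theorem~\ref{injproj} with trivial cofibrations in place of cofibrations, while (ii) is dispatched as the dual, pointwise case.
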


\begin{proof}
Since limits are computed pointwise in $[D,\ca]_0$, it is enough to see that injective fibrations are in particular pointwise fibrations. This follows from the fact that the functors $-\otimes D(d,d')\colon \ca_0\to \ca_0$ preserve trivial cofibrations for all $d,d'\in D$. To see this, one can use a similar argument to the one used in the proof of Theorem \ref{injproj} to prove that injective trivial fibrations are in particular pointwise trivial fibrations when these tensor functors preserve cofibrations. 
\end{proof}

\bibliographystyle{alpha}

\end{document}